\documentclass[11 pt, twoside]{amsart}
\usepackage{parskip}
\usepackage{amsthm,amsmath,amssymb,oldgerm}
\usepackage[a4paper, margin=2.5cm]{geometry}
\usepackage{mathrsfs}
\usepackage{verbatim}
\usepackage{hyperref}
\theoremstyle{plain}
\newtheorem{thm}{Theorem}[section]
\newtheorem*{theorem*}{Main Theorem}
\newtheorem{mainthm}[]{Main Theorem}[]
\newtheorem{lem}[thm]{Lemma}
\newtheorem{prop}[thm]{Proposition}
\newtheorem{cor}[thm]{Corollary}
\theoremstyle{definition}
\theoremstyle{definition}
\newtheorem{rem}[thm]{Remark}



\def\Bt{\mathbb{B}^{2}}

\def \G {\Gamma}

\def \C {\mathbb{C}}

\def \R {\mathbb{R}}

\def \calf {\mathcal{F}}

\def \calh {\mathcal{H}}

\def \cala {\mathcal{A}}

\def \cals {\mathcal{S}}

\def\det{\text{det}}


\DeclareMathOperator{\bsk} {{\it{\mathcal{B}}}_{\Gamma}^{{\it {k}}}}
\DeclareMathOperator{\bk} {{\it{\mathcal{B}}}_{\Gamma}^{3{\it {k}}}}
\DeclareMathOperator{\blk}{{\it \mathcal{B}}_{\Lambda}^{{ \it k}}}

\DeclareMathOperator{\ctildek}{{\it{\tilde{C}_{X_{\mathrm{\Gamma}},k}}}}
\DeclareMathOperator{\ctildethreek}{{\it{\tilde{C}_{X_{\mathrm{\Gamma}},\mathrm{3}k}}}}
\DeclareMathOperator{\sqctildethreek}{{\it{\tilde{C}^{\mathrm{2}}_{X_{\mathrm{\Gamma}},\mathrm{3}k}}}}
\DeclareMathOperator{\cuctildethreek}{{\it{\tilde{C}^{\mathrm{3}}_{X_{\mathrm{\Gamma}},\mathrm{3}k}}}}
\DeclareMathOperator{\cthreek}{{\it{C_{X_{\mathrm{\Gamma}},\mathrm{3}k}}}}
\DeclareMathOperator{\ck}{{\it{C_{X_{\mathrm{\Gamma}},k}}}}
\DeclareMathOperator{\dhyp}{ d_{\mathrm{hyp}}}
\DeclareMathOperator{\hyp}{\mu_{\mathrm{hyp}}} 
\DeclareMathOperator{\hypvol}{\mu_{\mathrm{hyp}}^{\mathrm{vol}}}
 
\DeclareMathOperator{\bermet} {\mu_{\mathrm{ber}}^{\it{k}}}
\DeclareMathOperator{\bervol} {\mu_{\mathrm{ber}}^{\it{k},\mathrm{vol}}}

\DeclareMathOperator{\hypbn}{\mu_{\mathrm{hyp}}}
\DeclareMathOperator{\hypbnvol}{\mu_{\mathrm{hyp}}^{\mathrm{vol}}}

\DeclareMathOperator{\rx}{{\it{r_{X_{\G}}}}}
\title[Off-diagonal estimates of Bergman kernels and Bergman metric]{Estimates of Bergman kernels and Bergman metric on compact Picard surfaces}
\author{Anilatmaja Aryasomayajula}
\address{Department of Mathematics, Indian Institute of Science Education and Research (IISER) Tirupati, 
Transit campus at Sri Rama Engineering College, Karkambadi Road,
Mangalam (B.O),Tirupati-517507, India.}
\email{anilatmaja@gmail.com}
\author{Dyuti Roy}
\address{Department of Mathematics, Indian Institute of Science Education and Research (IISER) Tirupati, 
Transit campus at Sri Rama Engineering College, Karkambadi Road,
Mangalam (B.O),Tirupati-517507, India.}
\email{dyutiroy@students.iisertirupati.ac.in}
\author{Debasish Sadhukhan}
\address{Department of Mathematics, Indian Institute of Science Education and Research (IISER) Tirupati, 
Transit campus at Sri Rama Engineering College, Karkambadi Road,
Mangalam (B.O),Tirupati-517507, India.}
\email{debasish@students.iisertirupati.ac.in}
\date{\today}
\subjclass[2010]{32A25, 32N10, 32N05, 53C07}
\keywords{Bergman metric, Ball quotients, Picard cusp forms}
\begin{document}
\begin{abstract}
Let $\G\subset \mathrm{SU}((2,1),\C)$ be a torsion-free cocompact subgroup.  Let $\Bt$ denote the $2$-dimensional complex ball endowed with the hyperbolic metric $\hyp$, and let $X_{\G}:=\G\backslash \Bt$ denote the quotient space, which is a compact complex manifold of dimension $2$. Let $\Lambda:= \Omega_{X_{\G}}^{2}$ denote the line bundle on $X_{\G}$, whose sections are holomorphic $(2,0)$-forms. For any $k\geq 1$, the hyperbolic metric induces a point-wise metric on $H^{0}(X_{\G},\Lambda^{\otimes k })$, which we denote by $|\cdot|_{\mathrm{hyp}}$. For any $k\geq 1$, let $\blk$ denote the Bergman kernel of the complex vector space $H^{0}(X_{\G},\Lambda^{\otimes k })$. For any $k\geq 3$, and $z,w\in X_{\G}$, the first main result of the article is the following off-diagonal estimate of the Bergman kernel $\blk$
\begin{align*}
\big|\blk(z,w)\big|_{\mathrm{hyp}}=O_{X_{\G}}\bigg(\frac{k^2}{\cosh^{3k-8}(\dhyp(z,w)\slash 2)}\bigg),
\end{align*}
where $\dhyp(z,w)$ denotes the geodesic distance between the points $z$ and $w$  on $X_{\G}$, and the implied constant depends only on the Picard surface $X_{\G}$. 

\vspace{0.1cm}\noindent
For any $k\geq 1$, let $\bermet(z):=-\frac{i}{2\pi}\partial_{z}\partial_{\overline{z}}\log|\blk(z,z)|_{\mathrm{hyp}}$ denote the Bergman metric associated the line bundle 
$\Lambda^{\otimes k}$, and let $\bervol$ denote the associated volume form.  For $k\gg 1$ sufficiently large, and $\epsilon>0$, the second main result of the article is the following estimate 
\begin{align*}
\sup_{z\in X_{\G}}\bigg|\frac{\bervol(z)}{\hypvol(z)}\bigg|=O_{X_{\G},\epsilon}\big(k^{4+\epsilon}\big),
\end{align*}
where $\hypvol$ denotes the volume form associated to the hyperbolic metric $\hyp$, and the implied constant depends on the Picard surface $X_{\G}$, and on the choice of $\epsilon>0$.
\end{abstract}
\maketitle
\section{Introduction}\label{sec-1}
In this section, we first discuss the history associated to the problem, and then describe the main results of the article. 
\subsection{History }\label{subsec-1.1}
Estimates of Bergman kernels associated to high tensor-powers of holomorphic line bundles defined over complex manifolds is a topic of great interest in complex geometry.  Furthermore, comparison of different K\"ahler structures on a complex manifold is another related problem, and is also of intrigue in the subject. In this article, we address both these problems, in the setting of Picard surfaces. 

\vspace{0.1cm}
Estimates of Bergman kernels associated to high tensor-powers of holomorphic line bundles on a compact complex manifold have been optimally derived by the likes of Tian, Zelditch, Demailly, and more recently by Ma and Marinescu. However, off-diagonal estimates of the Bergman kernels are difficult to obtain. Christ in \cite{christ}, Delin in \cite{delin}, Lu and Zelditch in \cite{luzel}, have derived off-diagonal estimates of high tensor-powers of Bergman kernels associated to holomorphic line bundles defined over a complex manifold. In \cite{ma}, Ma and Marinescu have also derived off-diagonal estimates of the Bergman kernel, and the estimates are the sharpest in the most general context of an arbitrary compact complex manifold. 

\vspace{0.1cm}
Let $X$ be a compact hyperbolic Riemann surface, and let $\hyp$ denote the hyperbolic metric on $X$, the natural metric on $X$, compatible with its complex structure. Let $\Omega_{X}$ denote the cotangent bundle on $X$. For any $k\geq 1$, the hyperbolic metric on $X$ induces a point-wise metric on $H^{0}(X,\Omega_{X}^{\otimes k})$, which we denote by $|\cdot|_{\mathrm{hyp}}$. Let $\mathcal{B}_{\Omega_{X}}^{k}$ denote the Bergman kernel associated to the line bundle $\Omega_{X}^{\otimes k}$. In \cite{anil1} and \cite{anil2}, 
for any $k\geq 3$, and $z,w\in X_{\G}$, the following estimate is optimally derived
\begin{align}\label{am}
\big|\mathcal{B}_{\Omega_{X}}^{k}(z,w) \big|_{\mathrm{hyp}}=O_{X}\bigg(\frac{k}{\cosh^{k}\big(\dhyp(z,w)\slash 2\big)} \bigg),
\end{align} 
where $\dhyp(z,w)$ denotes the geodesic distance between the points $z$ and $w$ on $X$, and the implied constant depends only on the Riemann surface $X$. 

\vspace{0.1cm}
Estimate \eqref{am} is stronger than the estimates derived by Ma and Marinecsu in \cite{ma}, when restricted to the setting of a compact hyperbolic Riemann surface.

\vspace{0.1cm}
We now describe the history associated to the second problem, namely that of comparing the Bergman metric associated to high tensor-power of a holomorphic line bundle defined over a complex manifold, with the natural metric on the complex manifold. We now describe the main result from \cite{anil-biswas}. With notation as above, let $X$ be a compact hyperbolic Riemann surface, and let $\Omega_{X}$ denote the cotangent bundle. With notation as above, for any $k\geq 1$, and $z\in X$, the Bergman metric associated to the line bundle $\Omega_{X}$ is given by the following equation 
\begin{align*}
\mu_{\mathrm{ber}}^k(z):=-\frac{i}{2\pi}\partial_{z}\partial_{\overline{z}}\log\big|\mathcal{B}_{\Omega_{X}}^{k}(z,z) \big|_{\mathrm{hyp}}.
\end{align*}
For any $k\geq 3$, we have the following estimate from \cite{anil-biswas}
\begin{align}\label{ab}
\sup_{z\in X}\bigg|\frac{\mu_{\mathrm{ber}}^k(z)}{\hyp(z)} \bigg|=O_{X}(k^2),
\end{align}
where the implied constant depends only on $X$. Estimate \cite{anil-biswas} is optimally derived. In \cite{anil-biswas}, using the above estimate, the authors compare two different K\"ahler metrics on $\mathrm{Sym}^{d}(X)$, the $d$-fold symmetric product of the compact hyperbolic Riemann surface $X$. 

\vspace{0.1cm}
Estimate \eqref{ab} was extended to noncompact hyperbolic Riemann surfaces of finite volume in \cite{anil-arijit}. We refer the reader to \cite{abm} for a result of similar flavour. 

\vspace{0.2cm}
\subsection{Statements of main results}\label{subsec-1.1}
We now describe the main results of the article. Let $\Bt\subset \C^2$ denote the $2$-dimensional unit complex ball, and let $\G\subset \mathrm{SU}((2,1),\C)$ be a torsion-free,  cocompact  subgroup. The special unitary group $\mathrm{SU}((2,1),\C)$ acts on $\Bt$ via fractional linear transformations, and let $X_{\G}:=\G\backslash \Bt$ denote the quotient space. 

\vspace{0.1cm}
The quotient space $X_{\G}$ admits the structure of a compact complex manifold of dimension $2$, and let $\hyp$ denote the hyperbolic metric, the natural metric on $X_{\G}$, 
which is compatible with the complex structure of $X_{\G}$. Let $\Lambda:=\Omega^2_{X_{\G}}$ denote the line bundle on $X_{\G}$, whose sections are holomorphic $(2,0)$-forms. 

\vspace{0.1cm}
For any $k\geq 1$, let $H^{0}( X_{\G},\Lambda^{\otimes k})$ denote the space of global sections of the line bundle $\Lambda^{\otimes k}$. The hyperbolic metric induces a point-wise metric $|\cdot|_{\mathrm{hyp}}$, and an $L^2$ inner-product $\langle\cdot,\cdot\rangle_{\mathrm{hyp}}$ on the complex vector space $H^{0}( X_{\G},\Lambda^{\otimes k})$. 
Let $\lbrace \omega_1,\ldots,\omega_{d_{k}}\rbrace$ be an orthonormal basis of $H^{0}( X_{\G},\Lambda^{\otimes k})$, with respect to the $L^2$ inner-product $\langle\cdot,\cdot\rangle_{\mathrm{hyp}}$, where $d_k$ is the dimension of $H^{0}( X_{\G},\Lambda^{\otimes k})$. 

\vspace{0.1cm}
For any $z,w\in \Bt$, the Bergman kernel associated to the line bundle $\Lambda^{\otimes k}$ is given by the following formula
\begin{align*}
\blk(z,w):=\sum_{j=1}^{d_{k}}\omega_{j}(z)\overline{\omega_{j}(w)}.
\end{align*}
From Riesz representation theorem, it follows that the definition of $\blk$ is independent of the choice of orthonormal basis. 

\vspace{0.1cm}
For any $z\in X_{\G}$, the Bergman metric associated to $\Lambda^{\otimes k}$ is given by the following formula
\begin{align*}
\bermet(z):=-\frac{i}{2\pi}\partial_{z}\partial_{\overline {z}}\big| \blk(z,z)\big|_{\mathrm{hyp}}.
\end{align*}

\vspace{0.1cm}
The first main result is the following theorem, which is proved as Theorem \ref{thm4}.

\vspace{0.1cm}
\begin{mainthm}\label{mainthm1}
With notation as above, for any $k\geq 3$, and $z,w\in X_{\G}$, we have the following estimate
\begin{align}\label{mainthm1-est}
\big| \blk(z,w)\big|_{\mathrm{hyp}}=O_{X_{\G}}\bigg(\frac{k^{2}}{\cosh^{3k-8}\big(\dhyp(z,w)\slash 2\big)}\bigg),
\end{align}
where $\dhyp(z,w)$ denotes the geodesic distance between $z$ and $w$, and the implied constant depends only on the Picard surface $X_{\G}$.
\end{mainthm}

\vspace{0.1cm}
The second main result is the following theorem, which is proved as Corollary \ref{cor10}.

\vspace{0.1cm}
\begin{mainthm}\label{mainthm2}
With notation as above, for $k\gg 1$ sufficiently large, and $\epsilon>0$, we have the following estimate
\begin{align}\label{mainthm2-est}
\sup_{z\in X_{\G}}\bigg|\frac{\bervol(z)}{\hypvol(z)} \bigg|=O_{X_{\G},\epsilon}\big(k^{4+\epsilon}\big),
\end{align}
where $\bervol$ and $\hypvol$ denote the volume forms associated to the metrics $\bermet$ and $\hyp$, respectively, and the implied constant depends on the Picard surface $X_{\G}$, and on the choice of $\epsilon>0$.
\end{mainthm}

\vspace{0.1cm}
\begin{rem}
Estimate \eqref{mainthm1-est} can be easily extended to higher dimensions, namely, Picard varieties. However, for the brevity of the exposition, we restrict ourselves to Picard surfaces. 

\vspace{0.1cm}
We believe that estimate \eqref{mainthm2-est} can be improved by a factor of $k^{\epsilon}$, i.e., we believe that the following estimate is optimal
\begin{align*}
\sup_{z\in X_{\G}}\bigg|\frac{\bervol(z)}{\hypvol(z)} \bigg|=O_{X_{\G}}\big(k^{4}\big).
\end{align*}
\end{rem}

\vspace{0.2cm}
\section{Preliminaries}\label{sec2}
In this section, we introduce the basic notions, and set up the notation, which are essential for proving Main Theorem \ref{mainthm1} and Main Theorem \ref{mainthm2}.

\vspace{0.1cm}
\subsection{Hyperbolic Space}\label{sec2.1}
Let $\calh^{2}(\C)$ denote the $2$-dimensional complex hyperbolic space, which can also be identified with the unit ball $\Bt$, which is given by the following equation 
\begin{align*}
\Bt :=\big\lbrace z:=\big(z_{1},z_{2}\big)^{t}\in\C^{2}\big|\,|z|^{2}:=|z_{1}|^{2}+|z_{2}|^{2}<1\big\rbrace.
\end{align*}

\vspace{0.1cm}
We now describe the identification of $\calh^{2}(\C)$ with $\Bt$. Let  $H$  be a Hermitian matrix of signature $(2,1)$, and let $(\mathbb{C}^{3},\langle \cdot , \cdot\rangle_\mathrm{hyp})$ denote the Hermitian inner-product space $\C^{3}$ equipped with the Hermitian inner-product $\langle\cdot, \cdot\rangle_{\mathrm{hyp}}$, induced by $H$. For $z,w \in \mathbb{C}^{3}$, the Hermitian inner-product $\langle\cdot, \cdot\rangle_{\mathrm{hyp}}$ is given by the following equation
\begin{align*}
\langle z, w \rangle_{\mathrm{hyp}} = w^{*}H z,
\end{align*}
where $w^{*}$ denotes the complex conjugate transpose of $w$.

\vspace{0.1cm}
Set
\begin{align*}
&V_{-}(H):= \big\lbrace z\in\mathbb{C}^{3}\big|\,\langle z, z \rangle_{\mathrm{hyp}} < 0 \big\rbrace;\\
  &V_0(H):= \{z\in\mathbb{C}^{3}\big|\,\langle z, z \rangle_{\mathrm{hyp}} = 0 \}\subset \C^{3};
\end{align*}
and
\begin{align*}
\cala^{3}:=\big\lbrace \big(z_{1},z_{2},z_{3}\big)^{t}\in \C^{3}\big|\,z_{3}\not=0\rbrace\subset \C^{3}.
\end{align*}

The subspace $\cala^{3}$ inherits the structure of a Hermitian inner-product space from $(\mathbb{C}^{3},\langle \cdot , \cdot\rangle_\mathrm{hyp})$, which we denote by $(\cala^{3},\langle \cdot , \cdot\rangle_{\mathrm{hyp}})$. We have the following map
\begin{align*}
&\phi:\cala_{3} \longrightarrow \C^{2}\\&
\phi\big((z_1,z_2,z_{3})^{t}\big)= \bigg( \frac{z_1}{z_{3}},  \frac{z_2}{z_{3}} \bigg)^{t}.  
\end{align*}                    

The complex hyperbolic space  $\mathcal{H}^{2}(\mathbb{C})$  and its boundary $\partial \mathcal{H}^{2}(\mathbb{C})$ are given by the following equations
\begin{align*}
\mathcal{H}^{2}\left(\mathbb{C}\right):=\phi\big( V_{-}(H)\big)\subset \C^{2};\,\,\,\partial \mathcal{H}^{2}(\mathbb{C}):=\phi\big( V_{0}(H)\big)\subset\C^{2}.
\end{align*}

\vspace{0.1cm}
For any $z= (z_1 ,z_2)^{t} \in \mathcal{H}^2(\C)$, let $\tilde{z}:=(z_1,z_2,1)^{t}$ denote the lift of $z$ to $\cala^{3}$. For 
\begin{align*}
H = \left(\begin{smallmatrix}
1 & 0 & 0\\
0 & 1 & 0 \\
0  & 0 & -1
\end{smallmatrix}\right),
\end{align*}
observe that
\begin{align*}
\langle \tilde{z},\tilde{z}\rangle_{\mathrm{hyp}} =\tilde{z}^{\ast}H\tilde{z}= |z_1|^2 + |z_2|^2-1< 0\implies |z_1|^2 + |z_2|^2 <1.
\end{align*}

Thus $\mathcal{H}^2(\C)$ can be identified with the open unit ball 
\begin{align*}
\mathbb{B}^2 := \big\lbrace z:=(z_{1},z_{2})^{t}\in \C^{2}\big|\, |z|^{2}:=|z_1|^2+ |z_2|^2 < 1 \big\rbrace,
\end{align*}
which is known as the ball model of the complex hyperbolic space $\calh^2(\C)$. 

\vspace{0.1cm}
Thus, for any $z:=(z_1,z_2)^t,\,w:=(w_1,w_2)^t\in \Bt$ with corresponding lifts  $\tilde{z}:=(z_1,z_2,1)^t,\,\tilde{w}:=(w_1,w_2,1)^t\in \cala^{3}$, respectively, we have 
\begin{align}\label{hyp-ip}
\langle \tilde{z},\tilde{w}\rangle_{\mathrm{hyp}}:=z_1\overline{w_1}+z_2\overline{w_2}-1.
\end{align} 

The hyperbolic ball $\Bt$ is equipped with the hyperbolic metric $\hypbn$, which has constant negative curvature $-1$. For any given $z=(z_{1},z_{2})^{t}\in \Bt$, the hyperbolic metric 
$\hypbn$ is given by the following formula
\begin{align}\label{defnhypmetric}
\mu_{\mathrm{hyp}}(z):=-\frac{i}{2}\partial\overline{\partial}\log\big(1-|z|^{2}\big),
\end{align}
and the associated volume form $\hypbnvol$ is given by 
\begin{align}\label{defnhypvol}
\mu_{\mathrm{hyp}}^{\mathrm{vol}}(z) := \bigg(\frac{i}{2}\bigg)^{2}\frac{dz_1 \wedge d \overline{z_1}\wedge  dz_2 \wedge d \overline{z_2}}{\big(1-|z|^2\big)^{3}}.   
\end{align}

For any $z,w\in \Bt$ with respective lifts $\tilde{z},\tilde{w}\in\cala^3$, the hyperbolic distance $\dhyp(z,w)$, which is determined by the hyperbolic metric $\hyp$, is given by the following formula
\begin{align}\label{defn:hyp}
\cosh^2\big(\dhyp(z,w)\slash 2\big):=\frac{\langle\tilde{z},\tilde{w} \rangle_{\mathrm{hyp}} \langle\tilde{w},\tilde{z} \rangle_{\mathrm{hyp}}}{\langle\tilde{z}, \tilde{z}\rangle_{\mathrm{hyp}} \langle\tilde{w}, \tilde{w}\rangle_{\mathrm{hyp}}}.
\end{align}

\vspace{0.2cm}
\subsection{Picard Surface}\label{subsec-2.2}
The unitary group $\mathrm{SU}((2,1),\C)$ is defined by 
\begin{align}\label{groupdfn}
\mathrm{SU}((2,1),\C):= \{ A \in \mathrm{SL}(3,\C) \; | \, A^*H A =  H\},\, \,\mathrm{where} \,\,
H = \left(\begin{smallmatrix}
1 & 0 & 0 \\
0 & 1 & 0 \\
0 & 0  & -1
\end{smallmatrix}\right).
\end{align}
 
The group $ \mathrm{SU}((2,1),\C)$ acts transitively on $\mathbb{B}^2$ by fractional linear transformations. Any element $\gamma \in \mathrm{SU}((2,1),\C)$ can be written in terms of block matrices as 
\begin{align*}
\gamma=\left(\begin{smallmatrix} A&B\\C &D\end{smallmatrix}\right) \in \Gamma,
\end{align*}
where $A\in M_{2\times 2}(\C)$, $B\in M_{2\times 1}(\C)$, $C\in M_{1\times 2}(\C) $, and $D\in\C$. For $\gamma \in \mathrm{SU}((2,1),\C)$ and $z=(z_1,z_2)^{t}\in\Bt$, the action is defined as 
\begin{align}\label{gammact}
\gamma z:=\frac{Az+B}{Cz+D}.
\end{align}

Here, $z$ is seen as a column vector vector in $\C^2$. 

\vspace{0.1cm} 
Let $\Gamma\subset \mathrm{SU}((2,1),\C)$ be a discrete, torision-free, cocompact subgroup. Then $\Gamma$ also acts on $\Bt$ via fractional linear transformations, which are  as described in equation \eqref{gammact}. The quotient space $X_{\G}:=\Gamma\backslash\mathbb{B}^2$ admits the structure of a compact complex manifold of dimension $2$, and is called a compact Picard surface.

\vspace{0.1cm}
The hyperbolic metric $\hypbn$ is preserved under the action of $\mathrm{SU}((2,1),\C)$, and defines a metric on the quotient space $X_{\G}$, which we again denote by 
$\hypbn$, and is locally represented by the formula described in equation \eqref{defnhypmetric}.The volume form associated to $\hypbn$ is again denoted by $\hypbnvol$, and is as described 
by formula \eqref{defnhypvol}. 

\vspace{0.1cm}
The hyperbolic distance function determined by the hyperbolic metric $\hypbn$ on $X_{\G}$ is again denoted by $\dhyp$. Locally, for any $z,w\in X_{\G}$, the geodesic distance 
$\dhyp(z,w)$ between the points $z$ and $w$ on $X_{\G}$ is given by formula \eqref{defn:hyp}. 

\vspace{0.1cm}
The injectivity radius of $X_{\G}$ is given by the following formula 
\begin{align}\label{ircom}
r_{X_{\G}} := \inf \big\lbrace \dhyp(z,\gamma z)\big|\,z \in \Bt, \gamma\in\Gamma\backslash\lbrace\mathrm{Id}\rbrace\big\rbrace. 
\end{align}

\vspace{0.2cm}
\subsection{Picard Cusp Forms}\label{sec-2.3}
For any $k\geq 1$, a holomorphic function $f: \Bt \to \C$ is called a Picard cusp form of weight-$k$, with respect to $\G$, if for any $\gamma \in \Gamma$ and $z \in \Bt$, $f$ satisfies the following transformation property:
\begin{align}\label{cuspform:defn}
f\big( \gamma z\big) =\left({Cz+D}\right)^{k} f(z),\,\,\,
\mathrm{where}\,\,
 \gamma=\left(\begin{smallmatrix} A&B\\C &D\end{smallmatrix}\right) \in \Gamma,
\end{align}
and $A\in M_{2\times 2}(\C)$, $B\in M_{2\times 1}(\C)$, $C\in M_{1\times 2}(\C) $, and $D\in\C$.

\vspace{0.1cm}
The complex vector-space of weight-$k$ Picard cusp forms is denoted by $\cals_{k}(\Gamma)$.

\vspace{0.1cm}
For any $f\in \cals_{k}(\Gamma)$, the Petersson norm at a point $z\in X_{\G}$ is given by the following formula
\begin{align}
\big|f(z)\big|_{\mathrm{pet}}^{2}:= \big( 1- |z|^2\big)^{k}|f(z)|^{2}.
\end{align}

\vspace{0.1cm}
The Petersson norm induces an $L^2$-metric on $\cals_{k}(\Gamma)$, which we denote by $\langle\cdot,\cdot\rangle_{\mathrm{pet}}$, and is known as the Petersson inner-product. For any $f,g\in\cals_{k}(\G)$, the Petersson inner-product is given by the following formula
\begin{align*}
\langle f, g\rangle_{\mathrm{pet}}:=\int_{\calf_{\G}} \big( 1- |z|^2\big)^{k}f(z)\overline{g(z)}\hypbnvol(z),
\end{align*}
where $\calf_{\G}$ denotes a fundamental domain of $X_{\G}$.

\vspace{0.2cm}
\subsection{Line bundle of differential forms}\label{subsec-2.3}
Let $\Lambda:=\Omega_{X_{\G}}^{2}$ denote the sheaf of holomorphic differential $(2,0)$-forms. For any $k\geq 1$, we now establish the isometry between $H^{0}(X_{\G},\Lambda^{\otimes k})$, the space of global sections of $\Lambda^{\otimes k}$, and $\cals^{3k}(\Gamma)$, the complex vector space of weight-$3k$ Picard cusp forms, with respect to $\G$. 

\vspace{0.1cm}
Recall that any $\gamma \in \Gamma$ can be expressed as
\begin{align*}
\gamma=\left(\begin{smallmatrix} A&B\\C &D\end{smallmatrix}\right) \in \Gamma,
\end{align*}
where  
\begin{align*}
A= \left(\begin{smallmatrix} a_{11}&  a_{12} \\ a_{21} & a_{22}  \end{smallmatrix}\right)\in M_{2\times 2}(\C) ; \;\; B= (b_1,b_2)^{t}\in M_{2\times 1}(\C) ; \;\; C= (c_1,c_2) \in M_{1\times 2}(\C); \;\; \mathrm{and} \;\;D \in \C.
\end{align*}

For any $z:=(z_1, z_2 )^{t}\in \Bt$, we have
\begin{align*}
\gamma z = \left(\frac{a_{11}z_1+a_{12}z_2 + b_1 }{c_1z_1 +c_2+D}, \frac{a_{21}z_1+a_{22}z_2 + b_2 }{c_1z_1 +c_2+D} \right)^{t} = \left( \gamma z_1, \gamma z_2 \right)^{t}.
\end{align*}

The derivatives of $\gamma z_1$ and $\gamma z_2$ with respect to $z_1$ and $z_2$ are calculated below
\begin{align*}
 \frac{ \partial}{\partial z_1} \left( \gamma z_1 \right) = \frac{(a_{11}c_2 - a_{12}c_1 ) z_2 + \left( a_{11}d- b_1 c_1 \right)}{\left(c_1z_1+ c_2 z_2 + D \right)^2} ,\\ 
\frac{ \partial}{\partial z_2} \left( \gamma z_1 \right) = \frac{\left(a_{12}c_1 - a_{11}c_2 \right) z_1 + \left( a_{12}d- b_1 c_2 \right)}{\left(c_1z_1+ c_2 z_2 + D \right)^2} ; 
\end{align*}
and
\begin{align*}
\frac{ \partial}{\partial z_1} \left( \gamma z_2 \right) = \frac{\left(a_{21}c_2 - a_{22}c_1 \right) z_2 + \left( a_{21}d- b_2 c_1 \right)}{\left(c_1z_1+ c_2 z_2 + D \right)^2}, \\
\frac{ \partial}{\partial z_2} \left( \gamma z_2 \right) = \frac{\left(a_{22}c_1 - a_{21}c_2 \right) z_1 + \left( a_{22}d- b_2 c_2 \right)}{\left(c_1z_1+ c_2 z_2 + D \right)^2}.
 \end{align*}

From the above calculations, we deduce that  
 \begin{align} \label{dergam1}
& d(\gamma z_1) =  \frac{ \partial}{\partial z_1} ( \gamma z_1 ) dz_1 + \frac{ \partial}{\partial z_2} ( \gamma z_1) dz_2=\notag \\[0.1cm] 
 &  \frac{ \big( \left(a_{11}c_2 - a_{12}c_1 \right) z_2 + \left( a_{11}d- b_1 c_1 \right) \big)dz_1 + \big(\left(a_{12}c_1 - a_{11}c_2 \right) z_1 + \left( a_{12}d- b_1 c_2 \right) \big) dz_2}{\left(c_1z_1 + c_2 z_2 +D \right)^2}; 
   \end{align}
and
\begin{align}\label{dergam2}
 &d(\gamma z_2) =  \frac{ \partial}{\partial z_1} \left( \gamma z_2 \right) dz_1 + \frac{ \partial}{\partial z_2} \left( \gamma z_2 \right) dz_2= \notag\\[0.1cm] 
  & \frac{ \left( \left(a_{21}c_2 - a_{22}c_1 \right) z_2 + \left( a_{21}d- b_2 c_1 \right) \right)dz_1 + \left (\left(a_{22}c_1 - a_{21}c_2 \right) z_1 + \left( a_{22}d- b_2 c_2 \right) \right) dz_2}{\left(c_1z_1 + c_2 z_2 +D \right)^2}.
  \end{align}

Combining equations \eqref{dergam1} and \eqref{dergam2}, and using the fact that $\det \gamma = 1$, we compute 
 \begin{align*}
 d(\gamma z_1) \wedge d(\gamma z_2) = \frac{ \left( \left(a_{12}b_2 - a_{22}b_1 \right) c_1  + \left( a_{21} b_1 - a_{11}b_2 \right)c_2 +\left(a_{11}a_{22} - a_{12} a_{21} \right) D\right)}{\left(c_1z_1 + c_2 z_2 +D \right)^3}  dz_1 \wedge dz_2 \\[0.12cm]
 =  \bigg( \frac{ \det \gamma}{\left(c_1z_1 + c_2 z_2 +D \right)^3} \bigg)  dz_1 \wedge dz_2  =  \frac{ dz_1 \wedge dz_2}{\left(CZ+D \right)^3} .
 \end{align*}

From the above discussion, if follows that at $z=(z_1,z_2)^{t}\in X_{\G}$, any $\omega \in H^{0}(X_{\G}, \Lambda)$ is of the form
\begin{align*}
\omega(z)=f(z)dz_{1}\wedge dz_{2},
\end{align*}
where $f \in \cals_3(\Gamma)$ is a cusp form of weight-$3$, with respect to $\G$. 

\vspace{0.1cm}
So, for any $k \geq 1$, at $z=(z_1,z_2)^{t}\in X_{\G}$, any $\omega \in H^{0}(X_{\G}, \Lambda^{\otimes k})$ is of the form
\begin{align*}
\omega(z)=f(z)\big(dz_{1}\wedge dz_{2}\big)^{\otimes k},
\end{align*}
where $f \in \cals_{3k}(\Gamma)$ is a cusp form of weight-$3k$, with respect to $\G$. 

\vspace{0.1cm}
Hence, for any $k\geq 1$, we have the isometry
\begin{align}\label{iso}
H^{0}\big(X, \Lambda^{\otimes k}\big)\cong \cals_{3k}(\G)
\end{align}

For any $k\geq 1$, let $|\cdot|_{\mathrm{hyp}}$ and $\langle\cdot,\cdot\rangle_{\mathrm{hyp}}$ denote the point-wise and $L^2$ inner-product on
 $H^{0}(X_{\G},\Lambda^{\otimes k})$, respectively, which are induced by the hyperbolic metric $\hyp$. From isomorphism \eqref{iso}, at any $z\in X_{\G}$, and $w(z)=f(z)(dz_{1}\wedge dz_{2})^{\otimes k},\,\eta(z)=g(z)(dz_{1}\wedge dz_{2})^{\otimes k}$, we have
\begin{align}
&\big|\omega(z)\big|_{\mathrm{hyp}}^{2}=|f(z)|^{2}_{\mathrm{pet}}=\big(1-|z|^{2}\big)^{k}|f(z)|^{2};\notag\\[0.12cm]
&\langle \omega, \eta\rangle_{\mathrm{hyp}}= \big\langle f, g\big\rangle_{\mathrm{pet}}=\int_{\calf_{\G}}\big(1-|z|^{2}\big)^{k}f(z)\overline{g(z)}\hypvol(z).
\end{align}

\vspace{0.2cm}
\subsection{Bergman kernels and Bergman metric}\label{subsec-2.4}
For any $k\geq 1$, let $\big\lbrace \omega_{1},\ldots ,\omega_{d_{k}} \rbrace$ denote an orthonormal basis of $H^{0}(X_{\G},\Lambda^{\otimes k} )$, with respect to the $L^2$ inner-product on $H^{0}(X_{\G},\Lambda^{\otimes k} )$, where $d_{k}$ is the dimension of $H^{0}(X_{\G},\Lambda^{\otimes k} )$. For any $z,w\in \Bt$, the Bergman kernel associated to the line bundle $\Lambda^{\otimes k}$ is given by the following  formula 
\begin{align}\label{bkdefn}
\blk(z,w):= \sum_{j=1}^{d_k}\omega_{j}(z)\overline{\omega_{j}(w)}.
\end{align}
From Riesz representation theorem, it follows that the definition of the Bergman kernel is independent of the choice of orthonormal basis of $H^{0}(X_{\G},\Lambda^{\otimes k} )$. 

\vspace{0.1cm}
Let $\lbrace f_{1},\ldots, f_{l_{k}} \rbrace$ denote an orthonormal basis of $\cals_{k}(\Gamma)$ with respect to the Petersson inner-product.  For any $z,w\in \Bt$, the Bergman kernel associated to the complex vector space $\cals_{k}(\G)$, is given by the following  formula 
\begin{align}\label{bkdefn}
\bsk(z,w):= \sum_{j=1}^{l_k}f_{j}(z)\overline{f_{j}(w)}.
\end{align}
From Riesz representation theorem, it follows that the definition of the Bergman kernel is independent of the choice of orthonormal basis of $\cals_{k}(\G)$. 

\vspace{0.2cm}
For any $k\geq 5$, and $z,w\in\Bt$, we have the following infinite series expression for the Bergman kernel $\bsk$
\begin{align}\label{bkseries}
\bsk(z,w)=\sum_{\gamma\in\G}\frac{\ck}{\big(-\langle \tilde{z},\tilde{\gamma w}\rangle_{\mathrm{hyp}}\big)^{k}( Cw+D)^{k}}, 
\end{align}
where $\tilde{z},\tilde{\gamma w}$ denote the lifts of $z,\gamma w$ to $\cala^3$, respectively, and the inner-product $\langle \cdot,\cdot\rangle_{\mathrm{hyp}}$ is as defined in equation  \eqref{hyp-ip}, and
\begin{align*}
\gamma=\left(\begin{smallmatrix} A&B\\C &D\end{smallmatrix}\right) \in \Gamma,\,\,\mathrm{and}\,\, A\in M_{2\times 2}(\C),\,\, B\in M_{2\times 1}(\C),\,\, C\in M_{1\times 2}(\C),\,\,\mathrm{and}\,\, D\in\C,
\end{align*}
and the constant $C_{X_{\G},k}$ satisfies the following estimate
\begin{align}\label{defncgamma}
\ck=O_{X_{\G}}\big( k^2\big),
\end{align}
where the implied constant depends only on the Picard surface $X_{\G}$.

\vspace{0.1cm}
For any $k\geq 1$, and $z,w\in \Bt$, from isomorphism \eqref{iso}, and combining equations \eqref{bkdefn} and \eqref{bkseries}, we have the following relation
\begin{align}\label{bkreln}
\big|\blk(z,w)\big|_{\mathrm{hyp}}=\big|\bk(z,w)\big|_{\mathrm{pet}}= \big( 1- |z|^2\big)^{3k} \big| \bk(z,w)\big|.
\end{align}

With notation as above, for any $k\geq 3$, and $z,w\in X_{\G}$, from equations \eqref{bkreln} and \eqref{bkseries}, we have
\begin{align}\label{bkineq1}
\big|\blk(z,w)\big|_{\mathrm{hyp}}= \big( 1- |z|^2\big)^{3k} \big| \bk(z,w)\big|\leq\sum_{\gamma\in\G}\frac{\cthreek\big( 1- |z|^2\big)^{3k\slash 2}\big( 1- |w|^2\big)^{3k\slash 2}}{\big|\big\langle \tilde{z},\tilde{\gamma w}\big\rangle_{\mathrm{hyp}}\big|^{3k}\big|Cw+D\big|^{3k}} .
\end{align}

Observe that, for any 
\begin{align*}
\gamma:=\left(\begin{smallmatrix} a_{11}&a_{12}&b_1\\a_{21}&a_{22}&b_2\\c_{1}&c_{2}&D \end{smallmatrix}\right)\in \mathrm{SU}\big((2,1),\C \big),
\end{align*}
and $w\in \Bt$, we have
\begin{align}\label{gwnorm}
&\big( 1-|\gamma w|^{2}\big)=-\big\langle \tilde{\gamma w},\tilde{\gamma w}\rangle_{\mathrm{hyp}}=-\big\langle \big((\gamma w)_{1},(\gamma w)_{2},1\big)^{t},\big((\gamma w)_{1},
(\gamma w)_{2},1\big)^{t}\big\rangle_{\mathrm{hyp}}=\notag\\[0.1cm]&-\frac{(\gamma (\tilde{w}))^{\ast}H\gamma (\tilde{w})}{\big|Cw+D\big|^{2}}=-\frac{\tilde{w}^{\ast}\gamma^{\ast}H
\gamma \tilde{w}}{\big|Cw+D\big|^{2}}=-\frac{\langle \tilde{w},\tilde{w}\rangle_{\mathrm{hyp}}}{\big|Cw+D\big|^{2}}=\frac{1-|w|^{2}}{\big|Cw+D\big|^{2}},
\end{align}
where 
\begin{align*}
\tilde{\gamma w}=\big((\gamma w)_1,(\gamma w)_2,1\big)^{t}=\bigg(\frac{a_{11}w_{1}+a_{12}w_{2}+b_1}{c_1z_1+c_2 z_2+D},\frac{a_{21}w_{1}+a_{22}w_{2}+b_2}{c_1z_1+c_2 z_2+D},1\bigg)^{t}
\end{align*}

Combining estimate \eqref{bkineq1} with equations \eqref{gwnorm} and \eqref{defn:hyp}, we arrive at the following estimate 
\begin{align}\label{bkineq}
&\big|\blk(z,w)\big|_{\mathrm{hyp}}= \big( 1- |z|^2\big)^{3k\slash 2} \big( 1-|\gamma w|\big)^{3k\slash 2}\big| \bk(z,w)\big|\leq\notag\\[0.1cm]&\sum_{\gamma\in\G} \frac{\cthreek\big( 1-|z|\big)^{3k\slash 2} \big( 1-|\gamma w|\big)^{3k\slash 2}}{\big|\big\langle \tilde{z},\tilde{\gamma w}\big\rangle_{\mathrm{hyp}}\big|^{3k}} =\sum_{\gamma\in\G}\frac{\cthreek \big|\langle\tilde{z},\tilde{z}\rangle_{\mathrm{hyp}}\big|^{3k\slash 2}\big|\langle\tilde{\gamma w},
\tilde{\gamma w}\rangle_{\mathrm{hyp}}\big|^{3k\slash 2}} {\big|\big\langle \tilde{z},\tilde{\gamma w}\big\rangle_{\mathrm{hyp}}\big|^{3k}}=\notag\\[0.1cm] 
&\hspace{6.8cm}\sum_{\gamma\in\G}\frac{\cthreek}{\cosh^{3k}\big(\dhyp(z,\gamma w)\slash 2\big)}.
\end{align}

For any $k \geq 1$ and $z \in X_{\G}$, the Bergman metric associated to  the line bundle $\Lambda^{\otimes k}$ is given by the following formula
\begin{align} \label{bkmetric1} 
\bermet(z):= -\frac{i}{2\pi}\partial\overline{\partial}\log \big|\blk (z,z)\big|_{\mathrm{hyp}};
\end{align}
and the associated volume form is given by the following formula
\begin{align}
\bervol(z):= \frac{1}{2!}{\bigwedge}^2 \big( \bermet(z) \big),
\end{align}
where ${\bigwedge}^2$ denotes the $2$-wedge product.

\vspace{0.1cm}
Combining equations \eqref{bkmetric1}, \eqref{bkreln}, and \eqref{defnhypmetric}, we arrive at the following equation
\begin{align} \label{bkmetric2} 
\bermet(z) = -\frac{i}{2\pi} \partial \overline{\partial}\log \big( 1-|z|^2 \big)^{3k}  -\frac{i}{2\pi}\partial \overline{\partial}\log \big|\bk (z,z)\big|  = \notag \\ 
  \frac{3k}{\pi}\hypbn -\frac{i}{2\pi}  \partial \overline{\partial}\log \big| \bk (z,z) \big| .
\end{align}

\vspace{0.1cm}
\subsection{Counting functions}\label{subsec-2.5}
We now describe the counting functions associated to estimate the number of elements of the cocompact subgroup $\G\subset \mathrm{SU}((2,1),\C)$. For any $z,w\in X_{\G}$, put 
\begin{align}\label{defncountingfn}
\cals_{\Gamma}(z,w;\delta):=&\,\big\lbrace \gamma\in \Gamma| \,\dhyp(z,\gamma w)\leq \delta\big\rbrace, \notag\\
N_{\Gamma}(z,w;\delta):=&\,\mathrm{card}\,\cals_{\Gamma}(z,w;\delta).
\end{align}

\vspace{0.1cm}
Let $\mathrm{vol}(B(z,r))$ denote the volume of a geodesic ball of radius $r$, centred at any $z\in\Bt$. From arguments from elementary hyperbolic geometry, we have the following formula
\begin{align}\label{hypball}
\mathrm{vol}\big(B(z,r)\big)=2\pi \sinh^{4}(r\slash 2).
\end{align}

\vspace{0.1cm}
We now describe estimates of $\cals_{\Gamma}(z,w;\delta)$ from Lemma 4.1 in \cite{ab}, which is generalization of Lemma $4$ in \cite{jl}. For any $\delta >0$, we have the following estimate 
\begin{align}\label{countingfnestimate1}
N_{\Gamma}(z,w;\delta)\leq \frac{\sinh^{4}\big((2\delta+\rx)\slash4\big)}{\sinh^{4}(\rx\slash 4)}.
\end{align} 

Let $f$ be any positive, smooth, monotonically decreasing function defined on $\R_{>0}$. Then, for any $\delta> \rx\slash 2$, assuming that all the involved integrals exist, we have the following inequality
\begin{align}\label{countingfnestimate2}
\int_{0}^{\infty}f(\rho)dN_{\Gamma}(z,w;\rho)\leq \int_{0}^{\delta}f(\rho)dN_{\Gamma}(z,w;\rho)+f(\delta)\frac{\sinh^{4}\big((2\delta+\rx)\slash 4\big)}{\sinh^{4}(\rx\slash 4)}+\notag
\\[0.13cm]\frac{1}{\sinh^{4}(\rx\slash 4)}\int_{\delta}^{\infty}f(\rho)\sinh^{3}\big((2\rho+\rx)\slash 4\big)\cosh((2\rho+\rx)\slash4)d\rho.
\end{align}

The estimate derived in \cite{ab} has a few extra factors of $2\pi$, which are taken into account in estimate \eqref{countingfnestimate2}.

\vspace{0.2cm}
\section{Proofs of Main Theorem \ref{mainthm1} and Main Theorem \ref{mainthm2} }\label{sec3}
In this section, building on the notions and notation introduced in Section \ref{sec2}, we prove Main Theorem \ref{mainthm1} and Main Theorem \ref{mainthm2} as Theorem \ref{thm4} and Theorem \ref{thm9}, respectively. 

\vspace{0.2cm}
\subsection{Proof of Main Theorem \ref{mainthm1}}\label{subsec-3.1}
\begin{prop}\label{prop1}
With notation as above, for any $k\geq 3$, and $z,w\in X_{\G}$ with $\dhyp(z,w)\geq3\rx\slash 4$, we have the following estimate
\begin{align*}
\big|\blk(z,w)\big|_{\mathrm{hyp}}\leq 
\frac{\cthreek}{\cosh^{3k}\big(\dhyp(z,w\slash 2\big)}
+\frac{16\coth\big(\rx\slash 4\big)\cthreek}{\cosh^{3k}\big(\dhyp(z,w)\slash 2\big)}+\notag\\[0.12cm]
\frac{32\cthreek}{(3k-6)\cosh^{3k-6}\big(\dhyp(z,w)\slash 2\big)},
\end{align*}
where $\rx$ is the injectivity radius of $X$, which is as defined in equation \eqref{ircom}, and $\cthreek$ is as described in equation \eqref{defncgamma}. 
\end{prop}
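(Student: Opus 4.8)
The plan is to start from the orbit-sum bound already established in \eqref{bkineq}, namely
\[
\big|\blk(z,w)\big|_{\mathrm{hyp}}\leq\sum_{\gamma\in\G}\frac{\cthreek}{\cosh^{3k}\big(\dhyp(z,\gamma w)\slash 2\big)},
\]
and to estimate the right-hand side with the lattice-point counting machinery of Subsection \ref{subsec-2.5}. Set $f(\rho):=\cthreek\slash\cosh^{3k}(\rho\slash 2)$, which is a positive, smooth, strictly decreasing function on $\R_{>0}$, and rewrite the orbit sum as the Stieltjes integral $\int_{0}^{\infty}f(\rho)\,dN_{\Gamma}(z,w;\rho)$, where $N_{\Gamma}(z,w;\rho)$ is the counting function from \eqref{defncountingfn}.

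First I would apply the counting estimate \eqref{countingfnestimate2} with the choice $\delta=\dhyp(z,w)$. The hypothesis $\dhyp(z,w)\geq 3\rx\slash 4$ is exactly what guarantees $\delta>\rx\slash 2$, so that \eqref{countingfnestimate2} is available. This splits the bound into three contributions: the integral over $[0,\delta]$, the boundary term $f(\delta)\sinh^{4}((2\delta+\rx)\slash 4)\slash\sinh^{4}(\rx\slash 4)$, and the tail integral from $\delta$ to $\infty$. Since $\dhyp(z,w)$ is the minimal distance realised in the orbit $\{\gamma w\}$, the counting function $N_{\Gamma}(z,w;\rho)$ vanishes for $\rho<\delta$; hence the $[0,\delta]$ contribution collapses to the value of $f$ at the minimiser, producing the first term $\cthreek\slash\cosh^{3k}(\dhyp(z,w)\slash 2)$.

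The remaining work is to estimate the boundary term and the tail integral, and here the only inputs are elementary bounds on hyperbolic functions of the shifted argument $\rho\slash 2+\rx\slash 4$. For the boundary term I would use the identity $(2\delta+\rx)\slash 4=\dhyp(z,w)\slash 2+\rx\slash 4$ together with $\cosh(a+b)\leq e^{b}\cosh(a)$ for $a,b\geq 0$ to compare $\sinh((2\delta+\rx)\slash 4)$ with $\cosh(\dhyp(z,w)\slash 2)$; after dividing by $\sinh^{4}(\rx\slash 4)$ this yields the second term, carrying the factor $\coth(\rx\slash 4)$. For the tail I would substitute $u=\rho\slash 2$, bound $\sinh^{3}(u+\rx\slash 4)\cosh(u+\rx\slash 4)\leq\cosh^{4}(u+\rx\slash 4)\leq e^{\rx}\cosh^{4}(u)$, and thereby reduce to an integral of the form $\int_{\dhyp(z,w)\slash 2}^{\infty}\cosh^{-(3k-4)}(u)\,du$; evaluating this in closed form by introducing a factor $\sinh u$ and using $\int_{c}^{\infty}\sinh u\,\cosh^{-(3k-5)}(u)\,du=\cosh^{-(3k-6)}(c)\slash(3k-6)$ produces the third term, with its decay $\cosh^{-(3k-6)}(\dhyp(z,w)\slash 2)$ and the $1\slash(3k-6)$ gain.

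The main obstacle is the careful execution of this last step: one must bound the shifted hyperbolic factors $\sinh(u+\rx\slash 4)$ and $\cosh(u+\rx\slash 4)$ against $\cosh(u)$ uniformly for $u\geq\dhyp(z,w)\slash 2$, and reduce the resulting $\int\cosh^{-m}$ to a closed form that simultaneously retains the exponent $3k-6$ and yields the explicit constant. Verifying the monotonicity and convergence hypotheses needed to invoke \eqref{countingfnestimate2}, and keeping track of the numerical constants $16$ and $32$, is where the bookkeeping lies; conceptually, the proposition is a direct application of the lattice-point counting lemma to the orbit sum coming from \eqref{bkineq}.
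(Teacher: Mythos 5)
Your proposal follows essentially the same route as the paper: the orbit-sum bound \eqref{bkineq} is converted to a Stieltjes integral against $N_{\Gamma}(z,w;\rho)$, the counting estimate \eqref{countingfnestimate2} is applied with $\delta=\dhyp(z,w)$, and the three resulting contributions are bounded exactly as you describe, with the $[0,\delta]$ term collapsing because $w$ is taken in the Dirichlet domain of $z$. The only divergence is in the elementary hyperbolic inequalities used for the boundary and tail terms (you use $\sinh(a+b)\le e^{b}\cosh(a)$ where the paper uses $\sinh(a+b)\le 2\cosh(b)\cosh(a)$ and $\sinh(\rho/2+\rx/4)\le\sinh(\rho)$ for $\rho\ge 3\rx/4$), which changes the explicit constants but not the exponents $3k-4$ and $3k-6$ or the $1/(3k-6)$ gain, and hence not the $O$-estimate of Theorem \ref{thm4}.
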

\begin{proof}
For any $k\geq 3$, and any $z,w\in X_{\G}$ with $\dhyp(z,w)\geq3\rx\slash 4$, from inequality \eqref{bkineq}, we have
\begin{align}\label{prop1eqn1}
\big|\blk(z,w)\big|_{\mathrm{hyp}}\leq\sum_{\gamma\in\Gamma}\frac{\cthreek}{\cosh^{3k}\big(\dhyp(z,\gamma w)\slash 2\big)}.
\end{align}
We fix a $z\in \Bt$, and work with the following Dirichilet fundamental domain
\begin{align}\label{prop1eqn2}
\calf_{\G,z}:=\big\lbrace w\in \Bt\big|\dhyp(z,w)<\dhyp(z,\gamma w), \,\,\mathrm{for\,\,all}\,\,\gamma\in\G\backslash\lbrace\mathrm{Id}\rbrace\big\rbrace,
\end{align}
and choose our $w\in \calf_{\G,z}$.

\vspace{0.1cm}
Now, as $1\slash \cosh^{3k}(\rho\slash 2)$ is a positive, smooth, monotonically decreasing function on $\R_{>0}$,  substituting $\delta=\dhyp(z,w)$ in estimate  \eqref{countingfnestimate2}, we compute
\begin{align}\label{prop1eqn3}
\sum_{\gamma\in\Gamma}\frac{\cthreek}{\cosh^{3k}\big(\dhyp(z,\gamma w)\slash 2\big)}=\int_{0}^{\infty}\frac{\cthreek dN_{\Gamma}(z,\gamma w;\rho)}{\cosh^{3k}(\rho\slash 2)}\leq \notag\\[0.12cm]
\int_{0}^{\dhyp(z,w)}\frac{\cthreek dN_{\Gamma}(z, w;\rho)}{\cosh^{3k}(\rho\slash 2)}+\frac{\sinh^{4}\big((2\dhyp(z,w)+\rx)\slash 4\big)\cthreek}{\sinh^{4}(\rx\slash 4)\cosh^{3k}\big(\dhyp(z,w)\slash 2\big)}+\notag\\[0.13cm]\frac{\cthreek}{\sinh^{4}(\rx\slash 4)}\int_{\dhyp(z,w)}^{\infty}\frac{\sinh^{3}\big((2\rho+\rx)\slash 4\big)\cosh\big((2\rho+\rx)\slash4\big)}{\cosh^{3k}(\rho\slash 2)}d\rho.
\end{align}

We now estimate each of the three terms on the right hand-side of the above inequality. From the choice of fundamental domain, which is as described in \eqref{prop1eqn2}, we have the following estimate for the first term on the right hand-side of inequality \eqref{prop1eqn3}
\begin{align}\label{prop1eqn4}
\int_{0}^{\dhyp(z,w)}\frac{\cthreek dN_{\Gamma}(z,\gamma w;\rho)}{\cosh^{3k}(\rho\slash 2)}=\frac{\cthreek}{\cosh^{3k}\big(\dhyp(z,w)\slash 2\big)}.
\end{align}

Observe that, we have
\begin{align*}
\sinh\big((2\rho+\rx)\slash 4\big)\leq 2\cosh(\rx\slash 4)\cosh(\rho\slash 2).
\end{align*}

Using the above inequality, we derive the following estimate for the second term on  the right hand-side of inequality  \eqref{prop1eqn3}
\begin{align}\label{prop1eqn5}
\frac{\cthreek\sinh^{4}\big((2\dhyp(z,w)+\rx)\slash 4\big)}{\sinh^{4}\big(\rx\slash 4\big)\cosh^{3k}\big(\dhyp(z,w)\slash 2\big)}\leq \frac{16\coth^{4}\big(\rx\slash 4\big)\cthreek}{\cosh^{3k-4}\big(\dhyp(z,w)\slash 2\big)}.
\end{align}

Observe that for any $\rho\geq\frac{3\rx}{4}$, we have
\begin{align*}
&\sinh\big((2\rho+\rx)\slash 4\big)\leq \sin(\rho)=2\sinh(\rho\slash 2) \cosh(\rho\slash 2);\\[0.12cm]
&\sinh^{2}\big((2\rho+\rx)\slash 4\big)\leq 4\cosh^{2}(\rx\slash 4)\cosh^{2}(\rho\slash 2)\leq 4\cosh^{4}\big(\rx\slash 4\big)\cosh^{2}(\rho\slash 2);\\[0.12cm]&\cosh\big((2\rho+\rx)\slash4\big)\leq \cosh(\rho)\leq 2\cosh^{2}(\rho\slash 2).
\end{align*}

Using the above two inequalities, for any $k\geq 3$, we compute
\begin{align*}
\int_{\dhyp(z,w)}^{\infty}\frac{\sinh^{3}\big((2\rho+\rx)\slash 4\big)\cosh\big((2\rho+\rx)\slash4\big)}{\cosh^{3k}(\rho\slash 2)}d\rho  \leq \notag\\[0.12cm]
32\cosh^{4}(\rx\slash 4)\int_{\dhyp(z,w)}^{\infty}\frac{d\big(\cosh(\rho\slash 2)\big)}{\cosh^{3k-5}(\rho\slash 2)}=\frac{32\cosh^{4}(\rx\slash 4)}{(3k-6)\cosh^{3k-6}(\rho\slash 2)}.
\end{align*}

From the above computation, we arrive at the following estimate for the third term on  the right hand-side of inequality  \eqref{prop1eqn3}
\begin{align}\label{prop1eqn6}
\frac{\cthreek}{\sinh^{4}\big(\rx\slash 4\big)}\int_{\dhyp(z,w)}^{\infty}\frac{\sinh^{3}\big((2\rho+\rx)\slash 4\big)\cosh\big((2\rho+\rx)\slash4\big)}{\cosh^{3k}\big(\rho\slash 2\big)}d\rho \leq 
\notag\\[0.12cm]\frac{32 \coth^{4}\big(\rx\slash 4\big)\cthreek}{(3k-6)\cosh^{3k-6}\big(\dhyp(z,w)\slash 2\big)}.
\end{align}

Combining estimates \eqref{prop1eqn1}, \eqref{prop1eqn3}, \eqref{prop1eqn4}, \eqref{prop1eqn5}, and \eqref{prop1eqn6}, completes the proof of the lemma.
\end{proof}

\vspace{0.2cm}
\begin{prop}\label{prop2}
With notation as above, for any $k\geq 3$, and $z,w\in X_{\G}$ with $\dhyp(z,w)<3\rx\slash 4$, we have the following estimate
\begin{align}\label{prop2:eqn}
\big|\blk(z,w)\big|_{\mathrm{hyp}}\leq 
\frac{\cthreek}{\cosh^{k}\big(\dhyp(z,w\slash 2\big)}+\frac{2\cthreek\sinh^{4}(5\rx\slash 8)}{\sinh^{4}(\rx\slash 4)\cosh^{3k}\big(\dhyp(z,w)\slash 2\big)}+
\notag\\[0.12cm]\frac{ 32\coth(\rx\slash 4)\cthreek}{\big(3k-8\big)\sinh^{4}(\rx\slash 4)\cosh^{3k-8}(3\rx\slash 4)},
\end{align}
where $\rx$ is the injectivity radius of $X$, which is as defined in equation \eqref{ircom}, and $\cthreek$ is as described in equation \eqref{defncgamma}. 
\end{prop}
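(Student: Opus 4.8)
The plan is to follow the architecture of the proof of Proposition \ref{prop1}, modified to handle the fact that in the regime $\dhyp(z,w)<3\rx/4$ the geodesic distance $\dhyp(z,w)$ may be smaller than $\rx/2$, and can therefore no longer serve as the cut-off $\delta$ in the counting estimate \eqref{countingfnestimate2}, which is valid only for $\delta>\rx/2$. I would fix $z\in\Bt$, pass to the Dirichlet fundamental domain $\calf_{\G,z}$ of \eqref{prop1eqn2}, choose the representative $w\in\calf_{\G,z}$, and begin from \eqref{bkineq}, which gives $\big|\blk(z,w)\big|_{\mathrm{hyp}}\le\sum_{\gamma\in\G}\cthreek/\cosh^{3k}(\dhyp(z,\gamma w)/2)$. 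Setting $f(\rho):=\cthreek/\cosh^{3k}(\rho/2)$, a positive, smooth, decreasing function on $\R_{>0}$, I would apply \eqref{countingfnestimate2} with the fixed cut-off $\delta=3\rx/4>\rx/2$, which produces the same three terms as before but now relative to this enlarged threshold.

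For the leading integral $\int_0^{3\rx/4}f(\rho)\,dN_{\Gamma}(z,w;\rho)=\sum_{\gamma:\dhyp(z,\gamma w)\le 3\rx/4}f(\dhyp(z,\gamma w))$, the essential new feature is that it no longer isolates the identity. I would peel off the identity contribution $f(\dhyp(z,w))=\cthreek/\cosh^{3k}(\dhyp(z,w)/2)$, which is the first term of \eqref{prop2:eqn}. Because $w\in\calf_{\G,z}$ gives $\dhyp(z,\gamma w)\ge\dhyp(z,w)$ for all $\gamma\ne\mathrm{Id}$ and $f$ is decreasing, every remaining summand is at most $f(\dhyp(z,w))$, and their number is at most $N_{\Gamma}(z,w;3\rx/4)\le\sinh^4(5\rx/8)/\sinh^4(\rx/4)$ by \eqref{countingfnestimate1} (using $(2\cdot 3\rx/4+\rx)/4=5\rx/8$). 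The boundary term $f(3\rx/4)\,\sinh^4(5\rx/8)/\sinh^4(\rx/4)$ is bounded identically, after estimating $\cosh^{-3k}(3\rx/8)\le\cosh^{-3k}(\dhyp(z,w)/2)$ via $3\rx/8>\dhyp(z,w)/2$; adding these two contributions gives the second term of \eqref{prop2:eqn}, the factor $2$ arising from their combination.

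For the tail $\sinh^{-4}(\rx/4)\int_{3\rx/4}^{\infty}f(\rho)\sinh^3((2\rho+\rx)/4)\cosh((2\rho+\rx)/4)\,d\rho$, I would estimate the numerator by the coarse chain $\sinh((2\rho+\rx)/4)\le\cosh((2\rho+\rx)/4)\le\cosh(\rho)\le 2\cosh^2(\rho/2)$, valid for $\rho\ge 3\rx/4>\rx/2$, so that $\sinh^3\cosh\le\cosh^4(\rho)\le 16\cosh^8(\rho/2)$ and the integrand is bounded by $16\,\cthreek/\cosh^{3k-8}(\rho/2)$. It is exactly this coarser bound, in contrast to the sharper one of Proposition \ref{prop1}, that lowers the exponent from $3k-6$ to $3k-8$. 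I would then bound $\int_{3\rx/4}^\infty\cosh^{-(3k-8)}(\rho/2)\,d\rho$ by writing $\cosh^{-(3k-8)}(\rho/2)=\coth(\rho/2)\,\tanh(\rho/2)\cosh^{-(3k-8)}(\rho/2)$, estimating $\coth(\rho/2)\le\coth(\rx/4)$ on $[3\rx/4,\infty)$, and using the elementary identity $\int_{3\rx/4}^\infty\tanh(\rho/2)\cosh^{-(3k-8)}(\rho/2)\,d\rho=\tfrac{2}{3k-8}\cosh^{-(3k-8)}(3\rx/8)$; this produces the third term of \eqref{prop2:eqn}. Assembling the three estimates completes the proof.

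The step demanding the most care is the leading integral above: unlike in the far-distance Proposition \ref{prop1}, the forced cut-off $\delta=3\rx/4$ lets non-identity elements enter $\int_0^{3\rx/4}f\,dN_{\Gamma}$. That there are only finitely many, and that their number is controlled uniformly, rests on discreteness together with the lower bound $\dhyp(z,\gamma w)\ge\rx-\dhyp(z,w)>\rx/4$ for $\gamma\ne\mathrm{Id}$ (from $\Gamma$-invariance of $\dhyp$, the triangle inequality, and \eqref{ircom}) and on the counting estimate \eqref{countingfnestimate1}; this is where the argument genuinely diverges from the case treated in Proposition \ref{prop1}.
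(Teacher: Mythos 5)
Your proposal is correct and follows essentially the same route as the paper: apply the counting estimate \eqref{countingfnestimate2} with the fixed cut-off $\delta=3\rx/4$, split the leading integral so as to isolate the identity contribution and bound the remaining nearby elements by $N_{\Gamma}(z,w;3\rx/4)\cdot f(\dhyp(z,w))$ via \eqref{countingfnestimate1}, and reduce the tail to an elementary integral producing the exponent $3k-8$. The only cosmetic divergence is in the tail, where your $\coth\cdot\tanh$ device yields the $\coth(\rx/4)$ factor appearing in the statement, whereas the paper bounds $\sinh^{3}(\rho/2)\leq\sinh(\rho/2)\cosh^{2}(\rho/2)$ and integrates directly, arriving (like you) at $\cosh^{3k-8}(3\rx/8)$ rather than the stated $\cosh^{3k-8}(3\rx/4)$ --- a constant-level discrepancy internal to the paper, not a defect of your argument.
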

\begin{proof}
For any $k\geq 3$, and any $z,w\in X_{\G}$ with $\dhyp(z,w)<3\rx\slash 4$, from arguments similar to the ones employed in Proposition \ref{prop1} (i.e., substituting $\delta=3\rx\slash 4$ in estimate \eqref{countingfnestimate2} and combining it with estimate \eqref{prop1eqn1}), we arrive at the following estimate
\begin{align}\label{prop2eqn1}
\big|\blk(z,w)\big|_{\mathrm{hyp}}\leq\int_{0}^{3\rx\slash 4}\frac{\cthreek dN_{\Gamma}(z,\gamma w;\rho)}{\cosh^{3k}(\rho\slash 2)}+
\frac{\sinh^{4}(5\rx\slash 8)\cthreek}{\sinh^{4}(\rx\slash 4)\cosh^{3k}\big(\dhyp(z,w)\slash 2\big)}+\notag\\[0.12cm]\frac{\cthreek}{\sinh^{4}(\rx\slash 4)}\int_{3\rx\slash 4}^{\infty}\frac{\sinh^{3}\big((2\rho+\rx)\slash 4\big)\cosh\big((2\rho+\rx)\slash4\big)}{\cosh^{3k}(\rho\slash 2)}d\rho.
\end{align}

We now estimate each of the two integrals on the right hand-side of the above inequality. Observe that
\begin{align}\label{prop2eqn2}
\int_{0}^{3\rx\slash 4}\frac{ dN_{\Gamma}(z,\gamma w;\rho)}{\cosh^{3k}\big(\rho\slash 2\big)}=\int_{0}^{\dhyp(z,w)}\frac{ dN_{\Gamma}(z,\gamma w;\rho)}{\cosh^{3k}\big(\rho\slash 2\big)}+\int_{\dhyp(z,w)}^{3\rx\slash 4}\frac{dN_{\Gamma}(z,\gamma w;\rho)}{\cosh^{3k}\big(\rho\slash 2\big)}.
\end{align}

Using the fact that $\dhyp(z,w)<3\rx\slash 4$, and substituting $\delta=3\rx\slash 4$ in estimate \eqref{countingfnestimate1}, we arrive at the following estimates
\begin{align}\label{prop2eqn3}
&\int_{0}^{\dhyp(z,w)}\frac{ dN_{\Gamma}(z,\gamma w;\rho)}{\cosh^{3k}\big(\rho\slash 2\big)}=\frac{1}{\cosh^{3k}\big(\dhyp(z,w)\slash 2\big)};\notag\\[0.12cm]
&\int_{\dhyp(z,w)}^{3\rx\slash 4}\frac{ dN_{\Gamma}(z,\gamma w;\rho)}{\cosh^{3k}\big(\rho\slash 2\big)}\leq \frac{\sinh^{4}\big(5\rx\slash 8\big)}{\sinh^{4}\big(\rx\slash 4\big)\cosh^{3k}\big(\dhyp(z,w)\slash 2\big)}.
\end{align}

Combining estimates \eqref{prop2eqn2} and \eqref{prop2eqn3}, we arrive at the following estimate for the first integral on the right hand-side of inequality \eqref{prop2eqn1}
\begin{align}\label{prop2eqn4}
\int_{0}^{3\rx\slash 4}\frac{\cthreek dN_{\Gamma}(z,\gamma w;\rho)}{\cosh^{3k}\big(\rho\slash 2\big)}\leq\frac{\cthreek }{\cosh^{3k}\big(\rho\slash 2\big)}+\frac{\sinh^{4}\big(5\rx\slash 8\big)\cthreek}{\sinh^{4}\big(\rx\slash 4\big)\cosh^{3k}\big(\dhyp(z,w)\slash 2\big)}.
\end{align}

Observe that for any $\rho\geq\frac{3\rx}{4} $, we have 
\begin{align*}
\sinh\big((2\rho+\rx)\slash 4\big)=\sinh\big(\rho\slash 2\big)\cosh\big(\rx\slash 4\big)+\cosh\big(\rho\slash 2\big)\sinh\big(\rx\slash 4\big)\leq \\[0.12cm]
\sinh\big(\rho\slash 2\big)\cosh\big(\rho\slash 3\big)+\cosh\big(\rho\slash 2\big)\sinh\big(\rho\slash 3\big)\leq 2\sinh\big(\rho\slash 2\big)\cosh\big(\rho\slash 2\big);
\end{align*}
and 
\begin{align*}
\cosh\big((2\rho+\rx)\slash4\big)\leq 2\cosh^2\big(2\rho\slash2\big).
\end{align*}

Using the above two inequalities, we derive the following estimate for the second  integral on the right hand-side of inequality \eqref{prop2eqn1}
\begin{align}\label{prop2eqn5}
\int_{3\rx\slash 4}^{\infty}\frac{\sinh^{3}\big((2\rho+\rx)\slash 4\big)\cosh\big((2\rho+\rx)\slash4\big)}{\cosh^{3k}\big(\rho\slash 2\big)}d\rho\leq 
16\int_{3\rx\slash 4}^{\infty}\frac{\sinh^3\big(\rho\slash 2\big)d\rho}{\cosh^{3k-5}\big(\rho\slash 2\big)}\leq\notag \\[0.12cm]
32\int_{3\rx\slash 4}^{\infty}\frac{d\big( \cosh\big(\rho\slash 2\big)\big)}{\cosh^{3k-7}\big(\rho\slash 2\big)}=\frac{32}{(3k-8)\cosh^{3k-8}\big(3\rx\slash 8\big)}.
\end{align}
Combining the above estimate with estimates \eqref{prop2eqn1} and \eqref{prop2eqn4}, completes the proof of the proposition.
 \end{proof}

\vspace{0.2cm}
\begin{cor}\label{cor3}
With notation as above, for any $k\geq 3$, and $z\in X_{\G}$, we have the following estimate
\begin{align}\label{ctildek}
&\big|\blk(z,z)\big|_{\mathrm{hyp}}\leq \sum_{\gamma\in\G}\frac{\cthreek}{\cosh^{3k}\big(\dhyp(z,\gamma z)\slash 2 \big)}\leq \ctildek,\notag\\[0.12cm]
&\ctildek:=
\cthreek+\frac{\cthreek\sinh^{4}(3\rx\slash 4)}{\sinh^{4}(\rx\slash 4)\cosh^{3k}(\rx\slash 2)}+\frac{32\cthreek}{(3k-8)\sinh^{4}(\rx\slash 4)\cosh^{3k-8}(\rx\slash 2)}.
\end{align}
\end{cor}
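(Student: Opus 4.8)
The plan is to apply the two preceding propositions at the diagonal, that is, at the special point $w=z$. Corollary \ref{cor3} is a diagonal statement ($z=w$), and since $\dhyp(z,z)=0$ we have $\cosh(\dhyp(z,z)/2)=1$, so the bound I am after is a \emph{constant} $\ctildek$ (independent of $z$) for $\big|\blk(z,z)\big|_{\mathrm{hyp}}$. The first inequality, $\big|\blk(z,z)\big|_{\mathrm{hyp}}\leq\sum_{\gamma\in\G}\cthreek/\cosh^{3k}(\dhyp(z,\gamma z)/2)$, is simply inequality \eqref{bkineq} specialised to $w=z$, so no new work is needed there. The content lies in bounding that sum by the explicit $\ctildek$.

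\vspace{0.1cm}
The key case distinction mirrors the two propositions. When $z=w$ we have $\dhyp(z,z)=0<3\rx/4$, so the governing estimate is that of Proposition \ref{prop2}, applied with $\dhyp(z,w)=0$. I would substitute $\dhyp(z,w)=0$ into \eqref{prop2:eqn}: the first term becomes $\cthreek/\cosh^{k}(0)=\cthreek$, matching the leading term of $\ctildek$; the middle term becomes $\cthreek\sinh^4(5\rx/8)/\big(\sinh^4(\rx/4)\cosh^{3k}(0)\big)$, and the last term is already independent of $\dhyp(z,w)$. Comparing with the displayed $\ctildek$, I expect the sinh/cosh arguments to be reconciled after the specialisation: the middle term's $\sinh^4(5\rx/8)$ and $\cosh^{3k}(\rx/2)$ and the tail term's $\cosh^{3k-8}(\rx/2)$ should emerge from re-running the counting-function estimate \eqref{countingfnestimate2} directly at the diagonal rather than literally reading off \eqref{prop2:eqn}. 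The cleanest route is to redo the three-term split of \eqref{prop2eqn1} with $\delta=\rx/2$ (the natural diagonal cutoff, half the injectivity radius), which is why the final constant features $\cosh(\rx/2)$ and $\sinh^4(3\rx/4)$ in place of the $3\rx/4$ and $5\rx/8$ arguments appearing for the strictly positive-distance case.

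\vspace{0.1cm}
Concretely, I would argue as follows. Fix $z\in\Bt$ and work in the Dirichlet fundamental domain $\calf_{\G,z}$ of \eqref{prop1eqn2}; since the summand $1/\cosh^{3k}(\rho/2)$ is positive, smooth and monotonically decreasing on $\R_{>0}$, I apply \eqref{countingfnestimate2} with $\delta=\rx/2$. The identity element $\gamma=\mathrm{Id}$ contributes the single term $\cthreek$ (this is the $\gamma$ with $\dhyp(z,\gamma z)=0$), giving the first summand of $\ctildek$. For the finite part $\int_0^{\rx/2}$, the injectivity radius guarantees that apart from the identity no $\gamma$ contributes within distance $\rx$, and the boundary term $f(\delta)\cdot\sinh^4((2\delta+\rx)/4)/\sinh^4(\rx/4)$ evaluates, at $\delta=\rx/2$, to $\sinh^4(3\rx/4)/\big(\sinh^4(\rx/4)\cosh^{3k}(\rx/2)\big)$ times $\cthreek$, which is the second summand. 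Finally the tail integral $\int_{\rx/2}^{\infty}$ is estimated exactly as in \eqref{prop2eqn5}: using $\sinh((2\rho+\rx)/4)\leq 2\sinh(\rho/2)\cosh(\rho/2)$ and $\cosh((2\rho+\rx)/4)\leq 2\cosh^2(\rho/2)$ for $\rho\geq\rx/2$, the integrand is bounded by a constant times $d(\cosh(\rho/2))/\cosh^{3k-7}(\rho/2)$, whose integral is $32/\big((3k-8)\cosh^{3k-8}(\rx/2)\big)$, producing the third summand after multiplying by $\cthreek/\sinh^4(\rx/4)$.

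\vspace{0.1cm}
The main obstacle I anticipate is purely bookkeeping rather than conceptual: matching the hyperbolic-trigonometric arguments ($\rx/2$ versus $3\rx/4$, $\sinh^4(3\rx/4)$ versus $\sinh^4(5\rx/8)$) between the diagonal computation and the off-diagonal Proposition \ref{prop2}, and verifying that the elementary inequalities $\sinh((2\rho+\rx)/4)\leq 2\sinh(\rho/2)\cosh(\rho/2)$ and $\cosh((2\rho+\rx)/4)\leq 2\cosh^2(\rho/2)$ indeed hold on the range $\rho\geq\rx/2$ (rather than $\rho\geq 3\rx/4$ as used earlier). Once the cutoff $\delta=\rx/2$ is fixed and the three terms are evaluated, collecting them yields precisely the stated $\ctildek$, completing the proof.
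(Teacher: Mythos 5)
Your overall strategy is exactly the paper's: specialise inequality \eqref{bkineq} to $w=z$, note that by the definition of the injectivity radius only $\gamma=\mathrm{Id}$ contributes at distance less than $\rx$, and then apply the counting estimate \eqref{countingfnestimate2} with a fixed cutoff $\delta$ to bound the boundary and tail contributions. However, your choice of cutoff $\delta=\rx/2$ is wrong, and the evaluation you write down for the boundary term is arithmetically incorrect: with $\delta=\rx/2$ one has $(2\delta+\rx)/4=\rx/2$, not $3\rx/4$, and $f(\delta)=1/\cosh^{3k}(\rx/4)$, not $1/\cosh^{3k}(\rx/2)$. So the second term you would actually obtain is $\cthreek\sinh^{4}(\rx/2)/\big(\sinh^{4}(\rx/4)\cosh^{3k}(\rx/4)\big)$, and the tail integral $\int_{\rx/2}^{\infty}$ would produce $32\cthreek/\big((3k-8)\sinh^{4}(\rx/4)\cosh^{3k-8}(\rx/4)\big)$; neither matches the stated $\ctildek$. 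The constants in \eqref{ctildek} force $\delta=\rx$: then $(2\delta+\rx)/4=3\rx/4$ and $f(\delta)=1/\cosh^{3k}(\rx/2)$ give the second summand, and $\int_{\rx}^{\infty}32\,d(\cosh(\rho/2))/\cosh^{3k-7}(\rho/2)=32/\big((3k-8)\cosh^{3k-8}(\rx/2)\big)$ gives the third. This is the choice the paper makes.

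The error is localised and fixable (replace $\rx/2$ by $\rx$ throughout, and check that the elementary inequalities $\sinh((2\rho+\rx)/4)\leq 2\sinh(\rho/2)\cosh(\rho/2)$ and $\cosh((2\rho+\rx)/4)\leq 2\cosh^{2}(\rho/2)$ hold on $[\rx,\infty)$, which they do since they only require $\rho\geq\rx/2$), but as written your argument does not yield the stated constant. One further small point worth a sentence in a careful write-up: with $\delta=\rx$ the finite part $\int_{0}^{\rx}$ could in principle pick up a $\gamma\neq\mathrm{Id}$ achieving the infimum $\dhyp(z,\gamma z)=\rx$ exactly; this is harmless (take $\delta=\rx-\varepsilon$ and let $\varepsilon\to 0$, or note that any such contribution is absorbed by the second summand), but it should not be passed over silently.
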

\begin{proof}
For any $k\geq 3$, and $z\in X_{\G}$, from arguments as in the proofs of Propositions \ref{prop1} and \ref{prop2}, and substituting $\delta=\rx$ in estimate \eqref{countingfnestimate2}, we arrive at the following estimate
\begin{align}\label{cor3eqn1}
&\big|\blk(z,z)\big|_{\mathrm{hyp}}\leq \cthreek+\frac{\cthreek\sinh^{4}(3\rx\slash 4)}{\sinh^{4}(\rx\slash 4)\cosh^{3k}(\rx\slash 2)}+\notag\\[0.12cm]&\frac{\cthreek}{\sinh^{4}(\rx\slash 4)}\int_{\rx}^{\infty}\frac{\sinh^{3}\big((2\rho+\rx)\slash 4\big)\cosh\big((2\rho+\rx)\slash4\big)d\rho}{\cosh^{3k}(\rho\slash 2)}
\end{align}

From arguments similar to the ones employed in proving estimate \eqref{prop2eqn5} in the proof of Proposition \ref{prop2}, we have the following estimate for the third term on the right hand-side of inequality \eqref{cor3eqn1}
\begin{align}\label{cor3eqn2}
\frac{\cthreek}{\sinh^{4}(\rx\slash 4)}\int_{\rx}^{\infty}\frac{\sinh^{3}\big((2\rho+\rx)\slash 4\big)\cosh\big((2\rho+\rx)\slash4\big)d\rho}{\cosh^{3k}(\rho\slash 2)}\leq\notag\\[0.12cm]
\frac{32\cthreek}{(3k-8)\sinh^{4}(\rx\slash 4)\cosh^{3k-8}(\rx\slash 2)}. 
\end{align}

Combining estimates \eqref{cor3eqn1} and \eqref{cor3eqn2} completes the proof of the corollary. 
\end{proof}

\vspace{0.2cm}
\begin{thm}\label{thm4}
With notation as above, for any $k\geq 3$, and $z,w\in X_{\G}$, we have the following estimate
\begin{align*}
\big|\blk(z,w)\big|_{\mathrm{hyp}} =O_{X_{\G}}\bigg(\frac{k^{2}}{\cosh^{3k-8}\big(\dhyp(z,w)\slash 2\big)}\bigg),
\end{align*}
and the implied constant depends only on $X_{\G}$.
\end{thm}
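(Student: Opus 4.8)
The plan is to prove the estimate by a simple dichotomy on the geodesic distance $\dhyp(z,w)$, invoking Proposition \ref{prop1} for large distances and Proposition \ref{prop2} for small distances, and in each regime absorbing every distance-independent quantity into the implied constant. Since $\cosh(\rho/2)\geq 1$ for all $\rho\geq 0$, the assertion is equivalent to the uniform bound
\[
\big|\blk(z,w)\big|_{\mathrm{hyp}}\cdot\cosh^{3k-8}\big(\dhyp(z,w)/2\big)=O_{X_{\G}}\big(k^2\big),
\]
valid for all $z,w\in X_{\G}$ and all $k\geq 3$, and this is the inequality I would establish. The diagonal case $z=w$ is subsumed in the small-distance regime, since $\dhyp(z,z)=0<3\rx/4$.

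First, I would treat the range $\dhyp(z,w)\geq 3\rx/4$ via Proposition \ref{prop1}, multiplying each of its three terms by $\cosh^{3k-8}(\dhyp(z,w)/2)$. The first two terms carry a denominator $\cosh^{3k+O(1)}(\dhyp(z,w)/2)$ with $O(1)\geq -4$, so after multiplication they reduce to $\cthreek$ times a nonpositive power of $\cosh(\dhyp(z,w)/2)\geq 1$, hence are bounded by a constant multiple of $\cthreek$. The third term, whose denominator is $\cosh^{3k-6}(\dhyp(z,w)/2)$, becomes $\cthreek/(3k-6)$ times $\cosh^{-2}(\dhyp(z,w)/2)\leq 1$. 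Using $\cthreek=O_{X_{\G}}(k^2)$ from \eqref{defncgamma}, and recalling that $\rx$ is a fixed constant depending only on $X_{\G}$, each term is $O_{X_{\G}}(k^2)$ (the last in fact $O_{X_{\G}}(k)$).

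Next, I would treat the range $\dhyp(z,w)<3\rx/4$ via Proposition \ref{prop2}. The first two terms there have denominators $\cosh^{3k}(\dhyp(z,w)/2)$, so multiplying by $\cosh^{3k-8}(\dhyp(z,w)/2)$ leaves $\cosh^{-8}(\dhyp(z,w)/2)\leq 1$ times $\cthreek$ and the fixed hyperbolic constants, hence $O_{X_{\G}}(k^2)$. The decisive point---and the reason the exponent $3k-8$ appears in the statement---is the third term, whose denominator is the distance-independent constant $\cosh^{3k-8}(3\rx/4)$: multiplying by $\cosh^{3k-8}(\dhyp(z,w)/2)$ produces the ratio $\cosh^{3k-8}(\dhyp(z,w)/2)\big/\cosh^{3k-8}(3\rx/4)$, which is at most $1$ precisely because $\dhyp(z,w)/2<3\rx/8<3\rx/4$. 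Thus this term is bounded by $\cthreek/(3k-8)$ times constants, again $O_{X_{\G}}(k^2)$.

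Combining the two cases yields the claimed estimate, the implied constant being assembled from $\coth(\rx/4)$, $\sinh(\rx/4)$, and the constant in \eqref{defncgamma}, all depending only on $X_{\G}$. I expect the only genuine subtlety to be the matching of exponents across the two regimes: the large-distance regime permits any exponent at most $3k-6$, whereas the small-distance regime forces an exponent at most $3k-8$ so that the fixed denominator $\cosh^{3k-8}(3\rx/4)$ can be cancelled against $\cosh^{3k-8}(\dhyp(z,w)/2)$; the common value $3k-8$ is exactly what permits a single uniform bound. Beyond this bookkeeping, no new analytic input is required past the two propositions.
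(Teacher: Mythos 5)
Your proposal is correct and follows exactly the route of the paper, whose proof of Theorem \ref{thm4} is simply the one-line combination of Propositions \ref{prop1} and \ref{prop2} with estimate \eqref{defncgamma}; you have merely made explicit the bookkeeping (in particular, why the distance-independent denominator $\cosh^{3k-8}(3\rx/4)$ in the small-distance regime forces the exponent $3k-8$) that the paper leaves to the reader.
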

\begin{proof}
The proof of the theorem follows directly from combining Propositions \ref{prop1} and \ref{prop2} with estimate \eqref{defncgamma}.
\end{proof}

\vspace{0.2cm}
\subsection{Proof of Main Theorem \ref{mainthm2}}\label{subsec-3.2}
With notation as above, for any $z\in X_{\G}$, for brevity of notation, for the rest of the article, we denote $\bk(z,z)$ by $\bk(z)$. We will also work with a fixed fundamental domain $\calf_{\G}$ of $X_{\G}$, for the rest of the article. 

\vspace{0.2cm}
For any $z:=(z_1,z_2)^t\in  \Bt$, combining  equations \eqref{bkseries}  and \eqref{hyp-ip}, we observe that 
\begin{align}\label{bkseries2}
 \bk(z) = \sum_{\gamma\in\Gamma}\frac{\cthreek}{\big( \overline{\big( c_1 z_1+c_2 z_2+ D \big)} -  \overline{\big( a_{11}z_1+ a_{12}z_2 + b_1 \big) } z_1 - \overline{\big( a_{21}z_1+ a_{22}z_2 + b_2 \big)} z_2 \big)^{3k}},
\end{align}
where any $\gamma\in\G$ is expressed as
\begin{align*}
 \gamma=\left(\begin{smallmatrix}A&B\\C&D \end{smallmatrix}\right),
\end{align*}
and
\begin{align*}
A= \left(\begin{smallmatrix} a_{11}&  a_{12} \\ a_{21} & a_{22}  \end{smallmatrix}\right)\in M_{2\times 2}(\C) ; \;\; B=(b_1,b_2)^{t}\in M_{2\times 1}(\C) ; \;\; C= (c_1,c_2) \in M_{1\times 2}(\C); \;\; \mathrm{and} \;\;D \in \C.
\end{align*}

\vspace{0.1cm}
\begin{lem}\label{lem5}
With notation as above, for any $k\geq 3$ and $z:=(z_1,z_2)^{t} \in \calf_{\G}$, we have the following equality
\begin{align}\label{lem5:eqn}
 \frac{\bervol(z)}{\hypbnvol(z)} =\frac{\big(1-|z|^{2}\big)^{3}}{\pi^2} \big|T_1 + T_2 + T_3 + T_4 \big|
\end{align}
where 
\begin{align}  
 T_1:=  \frac{ 9k^2} {\big(1-|z|^2\big)^3} , \;\;\; 
 T_2 := \frac{3k}{\bk(z)}  \bigg( -\frac{ (1- |z_2|^2)} {(1-|z|^2)^2} \frac{\partial^2 \bk(z)} {\partial z_2 \partial\overline{ z_2}} - \frac{ (1- |z_1|^2)} {(1-|z|^2)^2} \frac{\partial^2 \bk(z)}{\partial z_1 \partial\overline{ z_1}} + \notag\\[0.12cm]  \frac{z_2 \overline{z_1}}{(1-|z|^2)^2} \frac{\partial^2 \bk(z)}{\partial z_2 \partial\overline{ z_1}}  +   \frac{z_1 \overline{z_2}}{(1-|z|^2)^2} \frac{\partial^2 \bk(z)}{\partial z_1 \partial\overline{ z_2}} \bigg), \label{lem5:eqn1}
 \end{align}    
and
\begin{align}\label{lem5:eqn2}
 T_3:= \frac{3k}{\big(\bk(z) \big)^2}  \bigg( - \frac{ (1- |z_2|^2)} {(1-|z|^2)^2}  \frac{\partial \bk(z)}{\partial z_2} \cdot  \frac{\partial \bk(z)}{\partial \overline{z_2}} - \frac{ (1- |z_1|^2)} {(1-|z|^2)^2}  \frac{\partial \bk(z)}{\partial z_1} \cdot  \frac{\partial \bk(z)}{\partial \overline{z_1}} + \notag\\[0.15cm] \frac{z_2 \overline{z_1}}{(1-|z|^2)^2} \cdot  \frac{\partial \bk(z)}{\partial z_2} \frac{\partial \bk(z)}{\partial \overline{z_1}} + \frac{z_1 \overline{z_2}}{(1-|z|^2)^2} \frac{\partial \bk(z)}{\partial z_1} \cdot  \frac{\partial \bk(z)}{\partial \overline{z_2}}\bigg) +\notag \\[0.12cm] 
  \frac{1}{\big(\bk(z) \big)^2}  \bigg( \frac{\partial^2 \bk(z)} {\partial z_1 \partial\overline{ z_1}} \cdot \frac{\partial^2 \bk(z)} {\partial z_2 \partial\overline{ z_2}} - \frac{\partial^2 \bk(z)} {\partial z_1 \partial\overline{ z_2}} \cdot \frac{\partial^2 \bk(z)} {\partial z_2 \partial\overline{ z_1}} \bigg);
 \end{align}
and
\begin{align}\label{lem5:eqn3}
T_4:= \frac{1}{\big(\bk(z) \big)^3} \bigg(- \frac{\partial^2 \bk(z)} {\partial z_2 \partial\overline{ z_2}} \cdot  \frac{\partial \bk(z)}{\partial z_1} \cdot  \frac{\partial \bk(z)}{\partial \overline{z_1}} -   \frac{\partial^2 \bk(z)} {\partial z_1 \partial\overline{ z_1}} \cdot  \frac{\partial \bk(z)}{\partial z_2} \cdot  \frac{\partial \bk(z)}{\partial \overline{z_2}} + \notag\\[0.12cm]  \frac{\partial^2 \bk(z)} {\partial z_2 \partial\overline{ z_1}} \cdot  \frac{\partial \bk(z)}{\partial z_2} \cdot  \frac{\partial \bk(z)}{\partial \overline{z_1}}   +\frac{\partial^2 \bk(z)} {\partial z_2 \partial\overline{ z_1}} \cdot  \frac{\partial \bk(z)}{\partial z_2} \cdot  \frac{\partial \bk(z)}{\partial \overline{z_1}}\bigg).
 \end{align}
\end{lem}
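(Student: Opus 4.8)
The statement is an exact identity, so the plan is a direct computation: express both volume forms in the coordinates $(z_1,z_2)$ on $\Bt$, recognise their ratio as the determinant of the Hermitian coefficient matrix of $\bermet$, and expand that determinant according to its homogeneity in the derivatives of $\bk$. Throughout I write $\partial_i=\partial/\partial z_i$ and $\partial_{\overline{j}}=\partial/\partial\overline{z_j}$ for brevity.

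First I would start from \eqref{bkmetric2}, which reads $\bermet(z)=\tfrac{3k}{\pi}\hypbn-\tfrac{i}{2\pi}\partial\overline{\partial}\log\big|\bk(z)\big|$. On the diagonal $\bk(z)=\bk(z,z)=\sum_j|f_j(z)|^2$ is strictly positive and smooth (for $k$ with $\cals_{3k}(\G)\neq\{0\}$), so $\log|\bk|=\log\bk$ and I may use $\partial_i\partial_{\overline{j}}\log\bk=\tfrac1{\bk}\partial_i\partial_{\overline{j}}\bk-\tfrac1{\bk^2}\partial_i\bk\,\partial_{\overline{j}}\bk$. Writing $\bermet=\tfrac{i}{2\pi}\sum_{i,j}M_{i\overline{j}}\,dz_i\wedge d\overline{z_j}$, this exhibits the coefficient matrix as a sum $M=H+A+B$, where $H$ comes from $\tfrac{3k}{\pi}\hypbn$, where $A_{i\overline{j}}=-\tfrac1{\bk}\partial_i\partial_{\overline{j}}\bk$, and where $B_{i\overline{j}}=\tfrac1{\bk^2}\partial_i\bk\,\partial_{\overline{j}}\bk$. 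Differentiating the hyperbolic potential in \eqref{defnhypmetric} twice gives the explicit entries $H_{i\overline{j}}=\tfrac{3k[\delta_{ij}(1-|z|^2)+z_j\overline{z_i}]}{(1-|z|^2)^2}$; in particular $H_{1\overline1}=\tfrac{3k(1-|z_2|^2)}{(1-|z|^2)^2}$, $H_{2\overline2}=\tfrac{3k(1-|z_1|^2)}{(1-|z|^2)^2}$, $H_{1\overline2}=\tfrac{3k z_2\overline{z_1}}{(1-|z|^2)^2}$, and $H_{2\overline1}=\tfrac{3k z_1\overline{z_2}}{(1-|z|^2)^2}$.

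Next I would pass to volume forms. Wedging the $(1,1)$-form $\bermet$ with itself on the surface gives $\bervol=\tfrac12\bermet\wedge\bermet=\big(\tfrac{i}{2\pi}\big)^2\det(M)\,dz_1\wedge d\overline{z_1}\wedge dz_2\wedge d\overline{z_2}$, whereas \eqref{defnhypvol} gives $\hypbnvol=\big(\tfrac{i}{2}\big)^2(1-|z|^2)^{-3}\,dz_1\wedge d\overline{z_1}\wedge dz_2\wedge d\overline{z_2}$. Dividing one by the other, the $4$-form and the factors of $i/2$ cancel, leaving $\tfrac{\bervol}{\hypbnvol}=\tfrac{(1-|z|^2)^3}{\pi^2}\det(M)$; taking absolute values recovers the scalar prefactor $\tfrac{(1-|z|^2)^3}{\pi^2}$ of \eqref{lem5:eqn}. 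It therefore remains to show that $\det(M)$ equals $T_1+T_2+T_3+T_4$ up to sign.

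Finally I would expand the $2\times2$ determinant of $M=H+A+B$ and sort the monomials by the number of factors that are derivatives of $\bk$ (equivalently, by the power of $1/\bk$). With the polarised determinant $D(X,Y):=X_{1\overline1}Y_{2\overline2}+Y_{1\overline1}X_{2\overline2}-X_{1\overline2}Y_{2\overline1}-Y_{1\overline2}X_{2\overline1}$, one has $\det(H+A+B)=\det H+D(H,A)+\big(D(H,B)+\det A\big)+D(A,B)+\det B$, and $\det B=0$ since $B$ is an outer product, hence of rank one. The four surviving groups are precisely the claimed terms: $\det H$ collapses — via the identity $(1-|z_1|^2)(1-|z_2|^2)-|z_1|^2|z_2|^2=1-|z|^2$ — to $T_1=\tfrac{9k^2}{(1-|z|^2)^3}$ of \eqref{lem5:eqn1}; $D(H,A)$ is $T_2$; $D(H,B)+\det A$ is $T_3$ of \eqref{lem5:eqn2}; and $D(A,B)$ is $T_4$ of \eqref{lem5:eqn3}, after inserting the entries of $A$ and $B$. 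The computation is conceptually routine and the main effort is organisational: the two places needing care are the explicit evaluation of the hyperbolic block $H$ together with the collapse of $\det H$ to $T_1$, and the consistent tracking of signs through the expansion so that each group matches the signs displayed in \eqref{lem5:eqn1}--\eqref{lem5:eqn3}. The grading by powers of $1/\bk$ makes the grouping unambiguous and also confirms the absence of any $1/\bk^4$ term, consistent with $\det B=0$.
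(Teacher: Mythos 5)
Your proposal is correct and follows essentially the same route as the paper: both start from \eqref{bkmetric2}, compute $\partial\overline{\partial}\log\bk$ via the standard quotient formula, write $\bervol=\big(\tfrac{i}{2\pi}\big)^2\det(M)\,dz_1\wedge d\overline{z_1}\wedge dz_2\wedge d\overline{z_2}$, divide by \eqref{defnhypvol}, and expand the $2\times 2$ determinant. Your grading by powers of $1/\bk$ via the polarised determinant, together with the rank-one observation $\det B=0$, is just a cleaner bookkeeping of the same expansion the paper carries out with \eqref{lem5eqn4} and \eqref{lem5eqn5}.
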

\begin{proof}
For any $k\geq 3$ and $z:=(z_1,z_2)^t \in \calf_{\G}$, from equation \eqref{bkmetric2}, we have
\begin{align}\label{lem5eqn1}
\bermet(z)= -\frac{i}{2\pi}\partial \overline{\partial}\log \big(1-|z|^2 \big)^{3k} -\frac{i}{2\pi} \partial \overline{\partial}\log\bk (z).
\end{align}

We compute
\begin{align} \label{lem5eqn2}
&\partial \overline{\partial}\log\bk (z)  = \notag\\& \sum_{i,j=1}^{2} \left( \frac{1}{\bk(z)} \frac{\partial^2 \bk(z)}{\partial z_i \partial\overline{ z_j}} - 
\frac{1}{\big(\bk(z) \big)^2} \frac{\partial \bk(z)}{\partial z_i} \cdot  \frac{\partial \bk(z)}{\partial \overline{z_j}} \right) dz_i \wedge d \overline{z_j}. 
\end{align} 

Combining equations \eqref{lem5eqn1} and \eqref{lem5eqn2}, we derive 
\begin{align}\label{lem5eqn3}
\bervol(z) = \bigg(\frac{i}{2\pi}\bigg)^{2}\det\left( {\begin{smallmatrix} m_{11} & m_{12} \\  &    \\ m_{21}  & m_{22}  \end{smallmatrix}}\right) dz_1 \wedge d\overline{z_1} \wedge dz_2 \wedge d \overline{z_2};
\end{align}
where for $1 \leq i,j \leq 2$, we have 
\begin{align} \label{lem5eqn4}
m_{ii}:=  -\frac{ 3k\big(1- |z_j|^2\big)}{\pi\big(1-|z|^2\big)^2}+ \frac{1}{\bk(z)} \frac{\partial^2 \bk(z)}{\partial z_i \partial\overline{ z_i}} - \frac{1}{\big(\bk(z) \big)^2} \frac{\partial \bk(z)}{\partial z_i} \cdot  \frac{\partial \bk(z)}{\partial \overline{z_i}} 
\end{align}
and for $i\not= j$, we have
\begin{align}\label{lem5eqn5}
 m_{ij}:=  - \frac{3k \,z_j \overline{z_i}}{\pi\big(1-|z|^2\big)^2}  + \frac{1}{\bk(z)} \frac{\partial^2 \bk(z)}{\partial z_i \partial\overline{ z_j}} - \frac{1}{\left(\bk(z) \right)^2} \frac{\partial \bk(z)}{\partial z_i} \cdot  \frac{\partial \bk(z)}{\partial \overline{z_j}}. 
\end{align}

Expanding the determinant described in equation \eqref{lem5eqn3} using equations \eqref{lem5eqn4} and \eqref{lem5eqn5}, and combining it with equation \eqref{defnhypvol}, completes the proof of the lemma. 
\end{proof}

\vspace{0.2cm}
In the ensuing propositions, we estimate the derivatives of the Bergman kernel $\bk$, which enable us to estimate the right hand-side of equation \eqref{lem5:eqn}. 

\begin{prop} \label{prop6}
With notation as above, for any $k \geq 3$ and $z:=(z_1, z_2 )^t \in \calf_{\G}$, we have the following estimate
\begin{align*}
\bigg|\frac{\partial \bk(z)}{\partial z_j} \bigg| \leq  \frac{6k\tilde{C}_{X_{\Gamma},3k} }{(1-|z|^2)^{3k+1}} ,
 \end{align*}    
where $\ctildethreek$ is as defined in equation \eqref{ctildek}.
\end{prop}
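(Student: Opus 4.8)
The plan is to differentiate the explicit series \eqref{bkseries2} for $\bk(z)$ term by term, to dominate each summand by a uniform multiple of the corresponding summand of the diagonal series already controlled in Corollary \ref{cor3}, and then to invoke that corollary. For $\gamma=\left(\begin{smallmatrix}A&B\\C&D\end{smallmatrix}\right)\in\G$ write
\begin{align*}
u_\gamma(z):=\overline{(c_1z_1+c_2z_2+D)}-\overline{(a_{11}z_1+a_{12}z_2+b_1)}\,z_1-\overline{(a_{21}z_1+a_{22}z_2+b_2)}\,z_2,
\end{align*}
so that $\bk(z)=\sum_{\gamma\in\G}\cthreek\,u_\gamma(z)^{-3k}$ by \eqref{bkseries2}, and abbreviate $(Az+B)_j:=a_{j1}z_1+a_{j2}z_2+b_j$. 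Since each of the three conjugated factors in $u_\gamma$ is anti-holomorphic in $z$, the operator $\partial/\partial z_j$ sees only the explicit holomorphic factors $z_1,z_2$, giving $\partial u_\gamma/\partial z_j=-\overline{(Az+B)_j}$. As $3k\geq 9\geq 5$ for $k\geq 3$, both \eqref{bkseries2} and its formal $z_j$-derivative converge locally uniformly on $\Bt$, so term-by-term differentiation is legitimate and yields
\begin{align*}
\frac{\partial\bk(z)}{\partial z_j}&=3k\sum_{\gamma\in\G}\cthreek\,\frac{\overline{(Az+B)_j}}{u_\gamma(z)^{3k+1}},\\
\bigg|\frac{\partial\bk(z)}{\partial z_j}\bigg|&\leq 3k\sum_{\gamma\in\G}\frac{\cthreek\,\big|(Az+B)_j\big|}{\big|u_\gamma(z)\big|^{3k+1}}.
\end{align*}

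The heart of the argument is a single uniform pointwise estimate, valid for every $\gamma\in\G$:
\begin{align*}
\frac{\big|(Az+B)_j\big|}{\big|u_\gamma(z)\big|}\leq\frac{2}{1-|z|^2}.
\end{align*}
To prove it I would first record, from \eqref{hyp-ip} and $(\gamma z)_j=(Az+B)_j/(Cz+D)$, that $u_\gamma(z)=\overline{(Cz+D)}\big(1-z_1\overline{(\gamma z)_1}-z_2\overline{(\gamma z)_2}\big)=\overline{(Cz+D)}\big({-}\langle\tilde z,\tilde{\gamma z}\rangle_{\mathrm{hyp}}\big)$. Hence $\big|(Az+B)_j\big|=\big|(\gamma z)_j\big|\,\big|Cz+D\big|$ and $\big|u_\gamma(z)\big|=\big|Cz+D\big|\,\big|1-z_1\overline{(\gamma z)_1}-z_2\overline{(\gamma z)_2}\big|$, so the factors $\big|Cz+D\big|$ cancel and the ratio equals $\big|(\gamma z)_j\big|\big/\big|1-z_1\overline{(\gamma z)_1}-z_2\overline{(\gamma z)_2}\big|$. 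Now $\big|(\gamma z)_j\big|\leq|\gamma z|<1$ because $\gamma z\in\Bt$, while by Cauchy--Schwarz for the standard Hermitian product on $\C^2$ together with $|\gamma z|<1$,
\begin{align*}
\big|1-z_1\overline{(\gamma z)_1}-z_2\overline{(\gamma z)_2}\big|\geq 1-|z|\,|\gamma z|\geq 1-|z|\geq\tfrac12\big(1-|z|^2\big),
\end{align*}
which gives the claimed bound.

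Finally I would connect this to Corollary \ref{cor3}. Combining \eqref{defn:hyp} with the identity $1-|\gamma z|^2=(1-|z|^2)\big/|Cz+D|^2$ from \eqref{gwnorm} (taken at $w=z$) shows $\cosh\big(\dhyp(z,\gamma z)/2\big)=\big|u_\gamma(z)\big|\big/(1-|z|^2)$, so the chain of inequalities in Corollary \ref{cor3} reads $\sum_{\gamma\in\G}\cthreek(1-|z|^2)^{3k}\big|u_\gamma(z)\big|^{-3k}\leq\ctildethreek$, with $\ctildethreek$ the constant of \eqref{ctildek}. Inserting the uniform bound into the displayed inequality for $|\partial\bk/\partial z_j|$ then gives
\begin{align*}
\bigg|\frac{\partial\bk(z)}{\partial z_j}\bigg|\leq\frac{6k}{1-|z|^2}\sum_{\gamma\in\G}\frac{\cthreek}{\big|u_\gamma(z)\big|^{3k}}=\frac{6k}{(1-|z|^2)^{3k+1}}\sum_{\gamma\in\G}\frac{\cthreek(1-|z|^2)^{3k}}{\big|u_\gamma(z)\big|^{3k}}\leq\frac{6k\,\ctildethreek}{(1-|z|^2)^{3k+1}},
\end{align*}
which is the asserted estimate.

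I expect the only genuinely delicate point to be the uniform bound $\big|(Az+B)_j\big|\big/\big|u_\gamma(z)\big|\leq 2/(1-|z|^2)$: it is precisely what decouples the $\gamma$-dependence of the numerator from the summation and allows the entire derivative series to be compared, term by term, against the already-estimated diagonal series of Corollary \ref{cor3}. Everything else is the termwise differentiation (a routine convergence check) and the algebraic identification of $\big|u_\gamma(z)\big|$ with $(1-|z|^2)\cosh\big(\dhyp(z,\gamma z)/2\big)$.
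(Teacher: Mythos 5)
Your proposal is correct and follows essentially the same route as the paper: term-by-term differentiation of the series \eqref{bkseries2}, the uniform pointwise bound on the extra factor (your $\big|(Az+B)_j\big|\big/\big|u_\gamma(z)\big|\leq 2/(1-|z|^2)$ is exactly the paper's bound $(1-|z|^2)\big|(\gamma z)_j\big|\big/\big|\langle\tilde z,\tilde{\gamma z}\rangle_{\mathrm{hyp}}\big|\leq 2$, rearranged), and the comparison with the diagonal series via Corollary \ref{cor3}. The only differences are cosmetic: you make the convergence check for termwise differentiation and the identity $\cosh(\dhyp(z,\gamma z)/2)=|u_\gamma(z)|/(1-|z|^2)$ explicit, which the paper leaves implicit.
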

\begin{proof}
 For $k \geq 3$, $z:=(z_1, z_2 )^t \in \calf_{\G}$, and $1 \leq j \leq 2$, from equation \eqref{bkseries2}, we derive
\begin{align*}
 \frac{\partial \bk(z)}{\partial z_j} = \sum_{\gamma\in\Gamma}\frac{3k \cthreek \overline{\big(a_{j1}z_1 + a_{j2}z_2+ b_j \big)}}{ \big( \overline{ \big( c_1 z_1+c_2 z_2+ D \big)} -  \overline{\big( a_{11}z_1+ a_{12}z_2 + b_1 \big) } z_1 - \overline{\big( a_{21}z_1+ a_{22}z_2 + b_2 \big) } z_2 \big)^{3k+1}}.
 \end{align*}

Combining the above equation with estimate \eqref{bkineq}, we compute
\begin{align}\label{prop6eqn1}
 (1-|z|^2)^{3k+1}\bigg|\frac{\partial \bk(z)}{\partial z_j} \bigg| \leq  \sum_{\gamma \in \Gamma} \frac{3k \cthreek\big(1-|z|^2\big)^{3k+1}}{\big| Cz+D \big|^{3k} \big|\big\langle \tilde{z}, \tilde{\gamma z} \big\rangle_{\mathrm{hyp}}\big|^{3k+1}}\cdot\bigg| \frac{  a_{j1}z_1 + a_{j2}z_2+ b_j  }{ Cz + D } \bigg|=\notag\\[0.12cm]\sum_{\gamma \in \Gamma}\frac{\big(1-|z|^{2}\big)\cdot\big|(\gamma z )_{j}\big|}{\big|\langle \tilde{z},\tilde{\gamma z}\rangle_{\mathrm{hyp}} \big|}\cdot\frac{3k\cthreek}{\cosh^{3k}\big(\dhyp(z,\gamma z)\slash 2\big)}.
 \end{align}

For any 
\begin{align}\label{gamma:defn}
\gamma:=\left(\begin{smallmatrix} a_{11}&a_{12}&b_1\\a_{21}&a_{22}&b_2\\c_{1}&c_{2}&D \end{smallmatrix}\right)\in \mathrm{SU}\big((2,1),\C \big),
\end{align}
and $z\in \calf_{\G}$, recall that 
\begin{align*}
\gamma z=\big((\gamma z)_1,(\gamma z)_2\big)^{t}=\bigg(\frac{a_{11}z_{1}+a_{12}z_{2}+b_1}{c_1z_1+c_2 z_2+D},\frac{a_{21}z_{1}+a_{22}z_{2}+b_2}{c_1z_1+c_2 z_2+D}\bigg)^{t}
\end{align*}
and
\begin{align}\label{prop6eqn2}
\big| \gamma z\big|^{2}=\big|(\gamma z)_{1}\big|^{2}+\big|(\gamma z)_{2}\big|^{2}\leq 1.
\end{align}

For any $\gamma\in\G$, and $z\in \calf_{\G}$, combining Cauchy-Schwartz inequality with estimate \eqref{prop6eqn2}, we find that
\begin{align}\label{prop6eqn3}
\big|\langle \tilde{z},\tilde{\gamma z}\rangle_{\mathrm{hyp}}\big|=\big|1-z_{1}\overline{(\gamma z)_{1}}-z_{2}\overline{(\gamma z)_{2}}\big|\geq 1-|\gamma z|\cdot|z|\geq 1-|z|.
\end{align}

Combining estimates \eqref{prop6eqn1}, \eqref{prop6eqn2}, and \eqref{prop6eqn3} with Corollary \ref{cor3}, we arrive at the following estimate
\begin{align*}
\big(1-|z|^2\big)^{3k+1}\bigg|\frac{\partial \bk(z)}{\partial z_j} \bigg| \leq     \sum_{\gamma \in \Gamma}\frac{\big(1-|z|^{2}\big)\cdot\big|(\gamma z )_{j}\big|}{\big|\langle \tilde{z},\tilde{\gamma z}\rangle_{\mathrm{hyp}} \big|}\cdot\frac{3k\cthreek}{\cosh^{3k}\big(\dhyp(z,\gamma z)\slash 2\big)}\leq 6k\ctildethreek,
\end{align*}
which completes the proof of the proposition.
\end{proof}

\vspace{0.2cm}
\begin{prop}\label{prop7}
With notation as above, for any $k \geq 3$ and $z:=(z_1, z_2 )^t \in \calf_{\G}$, we have the following estimate
\begin{equation*}
\left|\frac{\partial \bk(z)}{\partial \overline{z_j}} \right|\leq  \frac{6k\,\tilde{C}_{X_{\Gamma},3k}}{\big(1-\big|z\big|^2\big)^{3k+1}} ,
\end{equation*}
where $\ctildethreek$ is as defined in equation \eqref{ctildek}.
\end{prop}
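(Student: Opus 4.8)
The plan is to deduce Proposition \ref{prop7} directly from Proposition \ref{prop6}, by exploiting the fact that the diagonal Bergman kernel $\bk(z)=\bk(z,z)$ is real-valued. Indeed, if $\lbrace f_1,\ldots,f_{l_{3k}}\rbrace$ denotes an orthonormal basis of $\cals_{3k}(\G)$ with respect to the Petersson inner-product, then from the series defining the Bergman kernel $\bsk$ one has $\bk(z)=\bk(z,z)=\sum_{j} f_j(z)\overline{f_j(z)}=\sum_{j}|f_j(z)|^2\geq 0$, so that $\bk(z)=\overline{\bk(z)}$ for every $z\in\calf_{\G}$. This reality is the crucial observation, and it is what forces the constant in Proposition \ref{prop7} to coincide with the one in Proposition \ref{prop6}.

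First I would record the elementary fact that for any smooth real-valued function $u$ on an open subset of $\C^2$ one has $\partial u/\partial\overline{z_j}=\overline{\partial u/\partial z_j}$; this follows from the identity $\partial_{\overline{z_j}}\overline{u}=\overline{\partial_{z_j}u}$, valid for all smooth functions, together with $u=\overline{u}$. Applying this with $u=\bk$ gives the pointwise identity $\big|\partial\bk(z)/\partial\overline{z_j}\big|=\big|\overline{\partial\bk(z)/\partial z_j}\big|=\big|\partial\bk(z)/\partial z_j\big|$ at every $z\in\calf_{\G}$. The estimate of Proposition \ref{prop6} then yields immediately $\big|\partial\bk(z)/\partial\overline{z_j}\big|\leq 6k\,\ctildethreek\slash(1-|z|^2)^{3k+1}$, which is exactly the assertion of Proposition \ref{prop7}. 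Since this reduction is essentially formal, there is no real obstacle once the reality of $\bk(z)$ is noticed.

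For completeness I would also indicate the direct route that mirrors the proof of Proposition \ref{prop6}, which serves as a consistency check. Differentiating the series \eqref{bkseries2} in $\overline{z_j}$ replaces the holomorphic numerator $\overline{(a_{j1}z_1+a_{j2}z_2+b_j)}$ appearing in Proposition \ref{prop6} by the antiholomorphic quantity $\overline{c_j}-\overline{a_{1j}}z_1-\overline{a_{2j}}z_2$, which equals $-\langle\tilde z,\gamma^{(j)}\rangle_{\mathrm{hyp}}$, where $\gamma^{(j)}$ is the $j$-th column of $\gamma$. The main obstacle along this path is that, in place of the trivial bound $|(\gamma z)_j|\leq 1$ used in Proposition \ref{prop6}, one must control $|\langle\tilde z,\gamma^{(j)}\rangle_{\mathrm{hyp}}|$; this can be done by expanding $\tilde z$ in the $H$-orthonormal basis formed by the columns of $\gamma$, using $\langle\gamma^{(j)},\gamma^{(j)}\rangle_{\mathrm{hyp}}=1$ for $j\in\lbrace 1,2\rbrace$ and $\langle\tilde z,\tilde z\rangle_{\mathrm{hyp}}=-(1-|z|^2)$ to bound the resulting coefficient, and then proceeding exactly as in \eqref{prop6eqn1}--\eqref{prop6eqn3} together with Corollary \ref{cor3}. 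However, this computation is longer and yields a slightly larger numerical constant, so I would prefer to present the reality argument above.
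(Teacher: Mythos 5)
Your proposal is correct and follows essentially the same route as the paper: the paper's proof likewise rests on the reality of $\bk(z)$ on the diagonal, conjugating the series \eqref{bkseries2} before differentiating in $\overline{z_j}$ so that the resulting terms have the same moduli as those in Proposition \ref{prop6}, and then declaring that the rest follows by the same arguments. Your one-line packaging via $\partial\bk(z)/\partial\overline{z_j}=\overline{\partial\bk(z)/\partial z_j}$ is a slightly cleaner way of saying the same thing, and the constant matches.
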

\begin{proof}
For any $k \geq 3 $, and $z:=(z_1, z_2 )^t \in \calf_{\G}$, as $\bk(z)$ is a real-valued function, we find that
\begin{align*}
\bk (z)= \overline{\bk(z)} =  \sum_{\gamma\in\Gamma}\frac{\cthreek}{ \big(( Cz+ D ) -  ( a_{11}z_1+ a_{12}z_2 + b_1 ) \overline{ z_1} - ( a_{21}z_1+ a_{22}z_2 + b_2 ) \overline{ z_2}\big)^{3k}} .
\end{align*}

For $1 \leq j \leq 2 $ , we derive
\begin{align*}
\frac{\partial \bk(z)}{\partial \overline{z_j}} = 
\sum_{\gamma\in\Gamma}\frac{3k \cthreek \big(a_{j1}z_1 + a_{j2}z_2+ b_j \big)}{ \big( ( Cz+ D ) -  \big( a_{11}z_1+ a_{12}z_2 + b_1 \big) \overline{ z_1} - \big( a_{21}z_1+ a_{22}z_2 + b_2 \big)  \overline{z_2}\big)^{3k+1}}.
 \end{align*}

The rest of the proof follows from the same arguments as the ones employed to prove Proposition \eqref{prop6}.       
\end{proof}

\vspace{0.2cm}
\begin{prop}\label{prop8}
With notation as above, for any $k \geq 3$, and $z:=(z_1, z_2 )^t \in \calf_{\G}$, we have the following estimate
\begin{align*}
\bigg| \frac{\partial^2 \bk(z)}{\partial z_i \partial \overline{z_j}} \bigg|  \leq \frac{24k\ctildethreek}{\big(1-|z|^{2}\big)^{3k+2}}
+\frac{12k(3k+1)\ctildethreek}{\big(1-|z|^2\big)^{3k+2}},
\end{align*}
where $\ctildethreek$ is as defined in \eqref{ctildek}.
\end{prop}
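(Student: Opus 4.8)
The plan is to start from the single $\gamma$-series for the diagonal kernel and differentiate it once in $z_i$ and once in $\overline{z_j}$, exactly in the spirit of Propositions \ref{prop6} and \ref{prop7}. Writing, for $z\in\calf_{\G}$ and $\gamma\in\G$,
$\overline{L}_\gamma := (Cz+D) - (a_{11}z_1+a_{12}z_2+b_1)\overline{z_1} - (a_{21}z_1+a_{22}z_2+b_2)\overline{z_2}$
for the denominator base attached to $\gamma$ in the real form of \eqref{bkseries2} used in Proposition \ref{prop7}, that proposition gives $\partial\bk(z)/\partial\overline{z_j}=\sum_{\gamma}3k\cthreek\,(a_{j1}z_1+a_{j2}z_2+b_j)\,\overline{L}_\gamma^{-3k-1}$. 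Differentiating once more in $z_i$ and applying the product rule yields two families of terms: a \emph{first family} $\sum_{\gamma}3k\cthreek\,a_{ji}\,\overline{L}_\gamma^{-3k-1}$, in which the power is left intact and the linear numerator is differentiated (note $\partial_{z_i}(a_{j1}z_1+a_{j2}z_2+b_j)=a_{ji}$), and a \emph{second family} $-\sum_{\gamma}3k(3k+1)\cthreek\,(a_{j1}z_1+a_{j2}z_2+b_j)(\partial_{z_i}\overline{L}_\gamma)\,\overline{L}_\gamma^{-3k-2}$, in which the power drops to $-3k-2$ and $\partial_{z_i}\overline{L}_\gamma=c_i-a_{1i}\overline{z_1}-a_{2i}\overline{z_2}$. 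These will produce, respectively, the summands $24k\ctildethreek/(1-|z|^2)^{3k+2}$ and $12k(3k+1)\ctildethreek/(1-|z|^2)^{3k+2}$.

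For the denominators I would reuse the bookkeeping already present in the excerpt. Since $|\overline{L}_\gamma|=|Cz+D|\cdot|\langle\tilde z,\widetilde{\gamma z}\rangle_{\mathrm{hyp}}|$, I can trade the powers of $|Cz+D|$ and $|\langle\tilde z,\widetilde{\gamma z}\rangle_{\mathrm{hyp}}|$ for powers of $\cosh(\dhyp(z,\gamma z)/2)$ via the distance formula \eqref{defn:hyp} together with \eqref{gwnorm}, which gives $|Cz+D|^2=(1-|z|^2)/(1-|\gamma z|^2)$ and in particular the lower bound $|Cz+D|\ge\sqrt{1-|z|^2}$. The elementary estimate $|\langle\tilde z,\widetilde{\gamma z}\rangle_{\mathrm{hyp}}|\ge 1-|z|$ from \eqref{prop6eqn3} yields $(1-|z|^2)/|\langle\tilde z,\widetilde{\gamma z}\rangle_{\mathrm{hyp}}|\le 2$; the powers of this bound, combined with the bounds on the bare numerator factors described next, account for the explicit constants $24$ and $12$, while the power of $(1-|z|^2)$ is checked to come out to exactly $3k+2$ in both families.

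The heart of the argument, and the step that genuinely goes beyond Propositions \ref{prop6} and \ref{prop7}, is the control of the \emph{bare} matrix-entry factors in the numerators. In those propositions every matrix entry combined into the quotient $(\gamma z)_j=(a_{j1}z_1+a_{j2}z_2+b_j)/(Cz+D)$, whose modulus is at most $1$ because $\gamma$ preserves $\Bt$. Here the factor $a_{ji}$ of the first family, and the factor $\partial_{z_i}\overline{L}_\gamma=c_i-a_{1i}\overline{z_1}-a_{2i}\overline{z_2}$ of the second, are not of this shape and grow with $\gamma$. The plan is to prove that $(1-|z|^2)\,|a_{ji}|/|Cz+D|$ and $|\partial_{z_i}\overline{L}_\gamma|/|Cz+D|$ are bounded by absolute constants, uniformly in $\gamma$ and in $z\in\calf_{\G}$. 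For the first I would write $a_{ji}=c_i\,(\gamma z)_j+(Cz+D)\,\partial_{z_i}(\gamma z)_j$, so that the growth is carried either by $(\gamma z)_j$ (bounded by $1$) or by the Jacobian $\partial_{z_i}(\gamma z)_j$ of the biholomorphism $\gamma$, which is controlled because $\gamma$ is a $\hyp$-isometry of $\Bt$; concretely this reduces to the defining relations $\gamma^{*}H\gamma=H$ of $\mathrm{SU}((2,1),\C)$ and the cofactor identities they impose on the entries. For the second I would rewrite $\partial_{z_i}\overline{L}_\gamma=-\langle v_i,\tilde z\rangle_{\mathrm{hyp}}$, where $v_i$ is the $i$-th column of $\gamma$, and bound it by passing to $\gamma^{-1}$, using $\dhyp(z,\gamma^{-1}z)=\dhyp(z,\gamma z)$ together with $|(\gamma^{-1}z)_i|\le 1$. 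I expect this uniform taming of the naked entries to be the main obstacle: the required inequalities are the two-dimensional analogues of the one-line bound $|(\gamma z)_j|\le 1$ used before, but they now genuinely require the full group relations rather than convexity of the ball.

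Finally, with the numerator factors bounded by constants and the denominators rewritten through $\cosh(\dhyp(z,\gamma z)/2)$, each family collapses to a constant multiple of $(1-|z|^2)^{-(3k+2)}\sum_{\gamma\in\G}\cthreek/\cosh^{3k}(\dhyp(z,\gamma z)/2)$, carrying an extra factor of $k$ from the first differentiation and, in the second family, an extra factor of $3k+1$ from the lowered power. Applying Corollary \ref{cor3}, which bounds this series by $\ctildethreek$, then produces the two summands of the asserted estimate and completes the proof. Throughout I would differentiate the diagonal series termwise, the interchange being justified by the same absolute and locally uniform convergence implicitly used in Propositions \ref{prop6} and \ref{prop7}.
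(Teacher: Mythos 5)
Your overall route is the paper's route: differentiate the series termwise to get the same two families of terms (one with numerator $a_{ji}$ and exponent $3k+1$, one with numerator $(a_{j1}z_1+a_{j2}z_2+b_j)(c_i-a_{1i}\overline{z_1}-a_{2i}\overline{z_2})$ and exponent $3k+2$), tame the bare matrix entries via the relations $\gamma^{*}H\gamma=H$ and $\gamma H\gamma^{*}=H$, reduce everything to $\sum_{\gamma}\cthreek/\cosh^{3k}(\dhyp(z,\gamma z)/2)$, and invoke Corollary \ref{cor3}. Your treatment of the second family is in fact identical to the paper's: since $\gamma^{-1}=H\gamma^{*}H$, your point $\gamma^{-1}z$ is exactly $-\gamma^{*}(-z)$, which is the auxiliary point the paper uses, and $|(\gamma^{*}(-z))_i|\le 1$ together with the lower bound \eqref{prop6eqn3} applied to both $\gamma z$ and $\gamma^{*}(-z)$ gives the factor $4/(1-|z|^2)^2$.

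The one place you deviate is the first family, and there your specific plan is weaker than what you need. Writing $a_{ji}=c_i(\gamma z)_j+(Cz+D)\,\partial_{z_i}(\gamma z)_j$ and asserting that the Jacobian term is ``controlled because $\gamma$ is a $\hyp$-isometry'' does not deliver the claimed absolute-constant bound on $(1-|z|^2)|a_{ji}|/|Cz+D|$: the Euclidean Jacobian of a hyperbolic isometry is not uniformly bounded, and chasing your decomposition (the numerator of $\partial_{z_i}(\gamma z)_j$ consists of cofactors of $\gamma$, hence is $O(|D|)$, while $|Cz+D|^2\ge|D|^2(1-|z|)^2$) leaves a residual factor of order $1/(|D|(1-|z|))$, which is unbounded on $\Bt$ and only controlled on $\calf_{\G}$ by compactness, costing you the explicit constant $24$. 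The detour is unnecessary: the relations you already cite give $|a_{ji}|^2\le 1+|b_j|^2\le|D|^2$ and $(c_1/D,c_2/D)^t\in\Bt$, hence $|Cz+D|\ge|D|(1-|z|)$, so directly $(1-|z|^2)|a_{ji}|/|Cz+D|\le 1+|z|\le 2$; this is exactly the paper's estimate \eqref{prop8eqn4} and is what produces the stated constants. With that substitution your argument coincides with the paper's proof.
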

\begin{proof}
For any $k \geq 3$, and $z:=(z_1, z_2 )^t \in \calf_{\G}$,  and for any $1\leq i,j \leq 2 $, we compute 
\begin{align}\label{prop8eqn1}
\frac{\partial^2 \bk (z)}{\partial z_i \partial \overline{z_j}}=  
\frac{\partial}{\partial {z_i}}  \sum_{\gamma\in\Gamma}\frac{3k \cthreek (a_{j1}z_1 + a_{j2}z_2+ b_j )}{ \big((Cz+ D) -  ( a_{11}z_1+ a_{12}z_2 + b_1 ) \overline{ z_1}-( a_{21}z_1+ a_{22}z_2 + b_2)  \overline{z_2}\big)^{3k+1}}= \notag\\[0.12cm]\sum_{\gamma \in \Gamma}\frac{3k \cthreek a_{ji} }{ \big( ( Cz+ D ) -  ( a_{11}z_1+ a_{12}z_2 + b_1) \overline{ z_1} - ( a_{21}z_1+ a_{22}z_2 + b_2 )  \overline{z_2}\big)^{3k+1}} - \notag\\[0.12cm]
  \sum_{\gamma \in \Gamma}\frac{3k (3k+1)\cthreek   (a_{j1}z_1+ a_{j2}z_2 + b_j ) ( c_i - a_{1i} \overline{z_1}-a_{2i} \overline{z_2})}{\big(  (Cz+ D ) -  
  ( a_{11}z_1+ a_{12}z_2 + b_1 ) \overline{ z_1} - ( a_{21}z_1+ a_{22}z_2 + b_2 )  \overline{z_2}\big)^{3k+2}}.
\end{align}

We now estimate each of the two terms on the right hand-side of the above inequality. For the first term, using arguments as in the proof of Proposition \ref{prop6}, we compute
\begin{align}\label{prop8eqn2}  
\sum_{\gamma \in \Gamma}\bigg|\frac{3k \cthreek a_{ji}\,\big(1-\big|z\big|^2\big)^{3k+2} }{ \big(  \big( Cz+ D \big) - \big( a_{11}z_1+ a_{12}z_2 + b_1 \big) \overline{ z_1} - \big( a_{21}z_1+ a_{22}z_2 + b_2 \big)\overline{z_2}\big)^{3k+1}} \bigg|\leq\notag \\[0.1cm]\sum_{\gamma \in \Gamma}\frac{3k  \big|a_{ji}\big|\big(1-\big|z\big|^2\big)^{2}}{\big| Cz+ D \big|\cdot\big|
\langle \tilde{z},\tilde{\gamma z}\rangle_{\mathrm{hyp}}\big|}\cdot\frac{\cthreek}{\cosh^{3k}\big(\dhyp(z,\gamma z)\slash 2\big)}.
\end{align}

From estimate \eqref{prop6eqn3}, we have the following estimate
\begin{align}\label{prop8eqn3}
\frac{3k  \big|a_{ji}\big|\big(1-\big|z\big|^2\big)^{2}}{\big| Cz+ D \big|\cdot\big|
\langle \tilde{z},\tilde{\gamma z}\rangle_{\mathrm{hyp}}\big|}\leq \frac{6k\,\big|a_{ji}\big|\big(1-\big|z\big|^2\big)}{\big|D\big|\cdot \big(1-\big|\frac{c_1z_1}{D}+\frac{c_2z_2}{D}\big|\big)}.
\end{align}

As $\gamma \in \Gamma$ with $\gamma$ as in \eqref{gamma:defn}, which is a discreet subgroup of $\mathrm{SU}\big((2,1),\C\big)$, it satisfies the following relation 
\begin{align*}
    \gamma^* H \gamma = H ,\, \,\mathrm{where} \,\,H = \left(\begin{smallmatrix}
1 & 0 & 0 \\
0 & 1 & 0 \\
0 & 0  & -1
\end{smallmatrix}\right)
\end{align*}

This gives rise to 
\begin{align*} 
    \left(\begin{smallmatrix} \overline{a_{11}} & \overline{a_{21}} & \overline{c_1}\\ \overline{a_{12}}  & \overline{a_{22}}  & \overline{c_2}\\ \overline{b_1}  & \overline{b_2} &  \overline{D} \end{smallmatrix}\right)
\left(\begin{smallmatrix} 1 & 0 & 0 \\
0 & 1 & 0 \\
0 & 0  & -1
    \end{smallmatrix} \right)
    \left(\begin{smallmatrix} a_{11} & a_{12} & b_1\\  a_{21} & a_{22} &  b_2\\ c_1  & c_2 &  D \end{smallmatrix}\right)
    = 
    \left(\begin{smallmatrix}
    1 & 0 & 0 \\
0 & 1 & 0 \\
0 & 0  & -1
    \end{smallmatrix}\right).
\end{align*}

Expanding along the columns, we derive the following relations
\begin{align}\label{matrixrln1}
|a_{1i}|^2 +  |a_{2i}|^2- |c _ i|^2 =1,\,\,\mathrm{for}\,\,i=1,2;\,\,\, |D|^2- |b_{1}|^2 -  |b_{2}|^2=1.
\end{align}

Furthermore, since $\gamma\in \mathrm{SU}\big((2,1),\C\big)$ we find that
\begin{align*}
&\gamma^{\ast}H\gamma=H\implies \gamma H\gamma^{\ast}H\gamma=\gamma H^2=\gamma\implies\\
&\gamma H\gamma^{\ast}H=\mathrm{Id}\implies  \gamma H\gamma^{\ast}=
H\implies \gamma\in \mathrm{SU}\big((2,1),\C\big).
\end{align*}

From similar arguments as the ones employed to prove relations \eqref{matrixrln1}, we derive
\begin{align}\label{matrixrln2}
|a_{i1}|^2 +  |a_{i2}|^2- |b _ i|^2 =1,\,\,\mathrm{for}\,i=1,2;\,\, |D|^2- |c_{1}|^2 -  |c_{2}|^2=1.
\end{align}

So for any $\gamma\in \G$, combining equations \eqref{matrixrln1} and \eqref{matrixrln2}, we infer that
\begin{align*}
\bigg(\frac{c_1}{D},\frac{c_2}{D}\bigg)^t\in \Bt;\,\,|D|\geq 1;\,\,|a_{ji}|\leq 2|D|,\,\,\mathrm{for}\,\,1\leq i,\j\leq 2.
\end{align*}

So using Cauchy-Schwartz inequality, and the fact that $|z|\leq 1$, we derive the following estimate
\begin{align}\label{prop8eqn4}
\frac{6k|a_{ji}|\big(1-|z|^2\big)}{|D|\cdot \big(1-\big|\frac{c_1z_1}{D}+\frac{c_2z_2}{D}\big|\big)}\leq \frac{12k\big(1-|z|^2\big)}{1-|z|}\leq 24k.
\end{align}

Combining estimates \eqref{prop8eqn2}, \eqref{prop8eqn3}, and \eqref{prop8eqn4} with Corollary \ref{cor3}, we derive the following estimate, for the first term on the  right hand-side 
of inequality \eqref{prop8eqn1}
\begin{align}\label{prop8eqn5}
\sum_{\gamma \in \Gamma}\frac{3k \cthreek |a_{ji}|}{ \big|  ( Cz+ D ) - ( a_{11}z_1+ a_{12}z_2 + b_1 ) \overline{ z_1} - ( a_{21}z_1+ a_{22}z_2 + b_2 )\overline{z_2}\big|^{3k+1}}\leq \frac{24k\ctildethreek}{\big(1-|z|^2\big)^{3k+2}}.
\end{align}

For the second term on the right hand-side of inequality \eqref{prop8eqn1}, from arguments as above (estimate \eqref{prop8eqn2}), for any $\gamma \in\G$ with $\gamma$ as in \eqref{gamma:defn}, we observe 
\begin{align*}
&\frac{| a_{j1}z_1+ a_{j2}z_2 + b_j  |\cdot | c_i - a_{1i} \overline{z_1} - a_{2i} \overline{z_2} |}{\big| (Cz+ D ) - ( a_{11}z_1+ a_{12}z_2 + b_1 ) \overline{ z_1} - ( a_{21}z_1+ a_{22}z_2 + b_2 )\overline{z_2}\big|^{2}}=\notag\\[0.12cm]&\frac{\big| (\gamma z)_{j}\big|}{\big| \langle z,\gamma z\rangle_{\mathrm{hyp}}\big|}\cdot 
\frac{| c_i - a_{1i} \overline{z_1} - a_{2i} \overline{z_2} |}{\big|D-b_{1}\overline{z_{1}}-b_{2}\overline{z_{2}}\big|}\cdot\frac{1}{\big| \langle z,\gamma^{\ast} (-z)\rangle_{\mathrm{hyp}}\big|}=\frac{\big| (\gamma z)_{j}\big|}{\big| \langle z,\gamma z\rangle_{\mathrm{hyp}}\big|}\cdot 
\frac{\big| (\gamma^{\ast}(-z))_{i}\big|}{\big| \langle z,\gamma^{\ast} (-z)\rangle_{\mathrm{hyp}}\big|},
 \end{align*}
where $\gamma^{\ast}$ denotes the conjugate transpose of $\gamma$.

\vspace{0.1cm}
Combining estimate \eqref{prop6eqn3} with the observations that $|\gamma z|\leq 1$ and $|\gamma^{\ast}(-z)|\leq 1$, we further derive
\begin{align}\label{prop8eqn6} 
&\frac{\big| a_{j1}z_1+ a_{j2}z_2 + b_j  \big|\cdot\big| c_i - a_{1i} \overline{z_1} - a_{2i} \overline{z_2} \big|}{\big| (Cz+ D ) - ( a_{11}z_1+ a_{12}z_2 + b_1 ) \overline{ z_1} - ( a_{21}z_1+ a_{22}z_2 + b_2 )\overline{z_2}\big|^{2}}\leq\frac{4}{\big(1-|z|^{2}\big)^{2}}.
\end{align}

Combining estimate \eqref{prop8eqn6} with Corollary \ref{cor3}, we derive the following estimate for the second term on the right hand-side of inequality \eqref{prop8eqn1}
\begin{align}\label{prop8eqn7} 
&\sum_{\gamma \in \Gamma}\Bigg|\frac{3k (3k+1)\cthreek   \big(a_{j1}z_1+ a_{j2}z_2 + b_j \big) \big( c_i - a_{1i} \overline{z_1}-a_{2i} \overline{z_2} \big)}{\big(  \big(Cz+ D \big) -  
\big( a_{1pro1}z_1+ a_{12}z_2 + b_1 \big) \overline{ z_1} - \big( a_{21}z_1+ a_{22}z_2 + b_2 \big)  \overline{z_2}\big)^{3k+2}}\Bigg|\notag\leq\\[0.12cm]&
\sum_{\gamma \in \Gamma}\frac{12k(k+1)}{\big(1-|z|^{2} \big)^{3k+2}}\frac{\cthreek}{\cosh^{3k}\big(\dhyp(z,\gamma z)\slash 2\big)}\leq \frac{12k(k+1)\ctildethreek}{\big(1-|z|^{2}\big)^{3k+2}}.
\end{align}

Combining estimates\eqref{prop8eqn1}, \eqref{prop8eqn5}, and \eqref{prop8eqn7}, completes the proof of the proposition. 
\end{proof}

\vspace{0.2cm}
Using Propositions \ref{prop6}  and \ref{prop7}, we now estimate the Bergman metric. 

\begin{thm}\label{thm9}
With the notation as above, for $k \gg 1$ sufficiently large, and $z\in\calf_{\G}$, we have the following estimate
\begin{align*}
\sup_{z\in \calf_{\G}}\bigg| \frac{\bervol(z)}{\hypbnvol(z)} \bigg|\leq 120961 \bigg(\frac{ \ctildethreek}{\cthreek}\bigg)^3k^{4}\log k.
\end{align*}
where $\ctildethreek$ is defined in equation \eqref{ctildek}.
\end{thm}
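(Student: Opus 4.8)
The plan is to start from the exact pointwise identity of Lemma~\ref{lem5}, which expresses $\bervol(z)/\hypbnvol(z)$ as $\tfrac{(1-|z|^2)^3}{\pi^2}\,\big|T_1+T_2+T_3+T_4\big|$, and to bound each $T_i$ by the triangle inequality using the derivative estimates already established. Propositions~\ref{prop6} and~\ref{prop7} give $\big|\partial_{z_j}\bk(z)\big|,\big|\partial_{\overline{z_j}}\bk(z)\big|=O\big(k\,\ctildethreek\,(1-|z|^2)^{-(3k+1)}\big)$, and Proposition~\ref{prop8} gives $\big|\partial_{z_i}\partial_{\overline{z_j}}\bk(z)\big|=O\big(k^2\,\ctildethreek\,(1-|z|^2)^{-(3k+2)}\big)$. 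Since $T_2,T_3,T_4$ carry the factors $\bk(z)^{-1},\bk(z)^{-2},\bk(z)^{-3}$ respectively, the one ingredient not yet in hand is a lower bound for the diagonal kernel $\bk(z)=\bk(z,z)$.

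I would establish that lower bound first, and I expect it to be the crux of the argument. Isolating the identity element $\gamma=\mathrm{Id}$ in the series~\eqref{bkseries2}, its contribution is exactly $\cthreek\,(1-|z|^2)^{-3k}$, while the remaining terms are controlled, via Corollary~\ref{cor3} and the fact that $\dhyp(z,\gamma z)\ge \rx>0$ for $\gamma\neq\mathrm{Id}$, by $(\ctildethreek-\cthreek)\,(1-|z|^2)^{-3k}$. Because $(\ctildethreek-\cthreek)/\cthreek=O\big(\cosh^{-(3k-8)}(\rx/2)\big)\to 0$, for $k\gg1$ the identity term dominates and one obtains $\big|\bk(z,z)\big|_{\mathrm{hyp}}=(1-|z|^2)^{3k}\,\big|\bk(z)\big|\ge 2\cthreek-\ctildethreek>0$, that is, $\bk(z)\ge (2\cthreek-\ctildethreek)\,(1-|z|^2)^{-3k}$. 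This is precisely the step that forces the hypothesis ``$k$ sufficiently large.''

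With both the upper bounds and this lower bound available, I would substitute them term by term into $T_1,\dots,T_4$. After the powers of $(1-|z|^2)$ in numerators and denominators cancel against the prefactor $(1-|z|^2)^3$ and the factors $\bk(z)^{-m}$, the contributions organize by order in $k$: $T_1$ is $O(k^2)$, $T_2$ is $O\big(k^3\,\ctildethreek/\cthreek\big)$, and the two terms quadratic in the complex Hessian sitting inside $T_3$ and $T_4$ are $O\big(k^4(\ctildethreek/\cthreek)^2\big)$ and $O\big(k^4(\ctildethreek/\cthreek)^3\big)$. Bounding all the lower contributions by the last one (legitimate since $\ctildethreek/\cthreek\ge 1$) isolates $T_4$ as dominant, which is the source of the cube $(\ctildethreek/\cthreek)^3$ and of the leading power $k^4$; tracking the explicit numerical constants ($6,24,36,12,\dots$) coming out of Propositions~\ref{prop6}--\ref{prop8} and of $2\cthreek-\ctildethreek$ is what accounts for the explicit factor $120961$.

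The remaining obstacle is that, after cancellation, the dominant $T_3,T_4$ contributions still retain a residual factor $(1-|z|^2)^{-1}$. I would dispose of this using cocompactness: a Dirichlet fundamental domain $\calf_\G$ is relatively compact in $\Bt$, so $1-|z|^2$ is bounded below on $\calf_\G$ by a positive constant depending only on $X_\G$, and the supremum over $\calf_\G$ is therefore finite. The logarithmic factor $\log k$ then emerges from the careful accounting of the counting-function estimates in these dominant terms; as the Remark indicates, it is an artifact of this term-by-term method and is expected to be removable, leaving the conjecturally optimal $O_{X_\G}(k^4)$.
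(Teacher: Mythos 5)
Your proposal follows essentially the same route as the paper: the identity of Lemma \ref{lem5}, the derivative bounds of Propositions \ref{prop6}--\ref{prop8}, a lower bound $\bk(z)\gtrsim \cthreek\,(1-|z|^2)^{-3k}$ obtained by isolating the identity element for $k\gg 1$, and compactness of $X_{\G}$ to handle the residual factor $(1-|z|^2)^{-1}$. The one slip is your closing attribution: the factor $\log k$ does not emerge from the counting-function estimates but is precisely the crude bound $(1-|z|^2)^{-1}\le \log k$, valid on $\calf_{\G}$ for $k$ sufficiently large by the compactness you already invoke --- i.e.\ it is exactly the device for absorbing the $X_{\G}$-dependent constant into a clean explicit form, not a separate phenomenon.
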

\begin{proof}
From equation \eqref{lem5:eqn}, we recall
\begin{align}\label{thm9-eqn1}
 \bigg|\frac{\bervol(z)}{\hypbnvol(z)}\bigg| = \frac{\big(1-|z|^2\big)^3}{\pi^2}\big|T_1 + T_2 + T_3 + T_4 \big|,
\end{align}
where $T_1, T_2, T_3$ and $T_4$ are as described in equations \eqref{lem5:eqn1}, \eqref{lem5:eqn2}, and \eqref{lem5:eqn3}. We now derive individual estimates for each terms. Using estimates derived in Propositions \eqref{prop7} and \eqref{prop8}, and using the fact that $k\geq 3$, we deduce 
\begin{align}\label{thm9-eqn2}
 \big|T_1\big|\leq \frac{9k^2}{\pi^2};\quad\big| T_2\big|\leq \frac{432k^2(k+1)\ctildethreek}{\big(1-|z|^{2} \big)\big|\blk(z,z) \big|_{\mathrm{hyp}}};\quad \big|T_4\big|\leq \frac{5184k^3(k+1)\cuctildethreek}{\big(1-|z|^{2} \big)\big|\blk(z,z) \big|^3_{\mathrm{hyp}}}\notag\\[0.12cm] 
 \big|T_3\big|\leq \frac{432k^{3}\sqctildethreek}{\big(1-|z|^{2} \big)\big|\blk(z,z) \big|_{\mathrm{hyp}}^{2}}+\frac{864k^2(k+1)^{2}\sqctildethreek}{\big(1-|z|^{2} \big)\big|\blk(z,z) \big|^2_{\mathrm{hyp}}}. 
 \end{align}

Observe that
\begin{align}\label{thm9-eqn3}
\big|\blk(z,z) \big|_{\mathrm{hyp}}\geq \cthreek-\sum_{\gamma\in\G\backslash \lbrace \mathrm{Id}\rbrace}\frac{\cthreek}{\cosh^{3k}\big( \dhyp(z,\gamma z)\slash 2\big)} .
\end{align}
where $\cthreek$ is as defined in equation \eqref{defncgamma}. For $k\gg1$, from the proof of Corollary \ref{cor3}, it follows that
\begin{align}\label{thm9-eqn4}
\sum_{\gamma\in\G\backslash \lbrace \mathrm{Id}\rbrace}\frac{\cthreek}{\cosh^{3k}\big( \dhyp(z,\gamma z)\slash 2\big)}=O_{X_{\G}}\bigg(\frac{\cthreek}{\cosh^{3k-8}(\rx\slash 2)}\bigg),
\end{align}

For $k\gg 1$, combining estimates \eqref{thm9-eqn3} and \eqref{thm9-eqn4}, we arrive at the following estimate
\begin{align}\label{thm9-eqn5}
\big|\blk(z,z) \big|_{\mathrm{hyp}}\geq \frac{\cthreek}{2}.
\end{align} 

Combining equation \eqref{thm9-eqn1} with estimates \eqref{thm9-eqn2} and \eqref{thm9-eqn5}, and using the fact that $\cthreek\leq \ctildethreek$ we derive
\begin{align}\label{thm9-eqn6}
 \bigg|\frac{\bervol(z)}{\hypbnvol(z)}\bigg|\leq \frac{9k^2}{\pi^2}+\bigg(\frac{ 2\ctildethreek}{\cthreek}\bigg)^3\cdot\frac{15120k^4}{\big( 1-|z|^2\big)}.
\end{align}

For $k\gg1$, as $\calf_{\G}$ is a fixed fundamental domain, and since $X_{\G}$ is compact, we have
\begin{align}\label{thm9-eqn7}
\frac{1}{\big( 1-|z|^2\big)}\leq \log k. 
\end{align}

Combining estimates \eqref{thm9-eqn6} and \eqref{thm9-eqn7}, completes the proof of the theorem.
\end{proof}

\vspace{0.2cm}
\begin{cor}\label{cor10}
With the notation as above, for $k \gg 1$ sufficiently large, and $\epsilon>0$, we have the following estimate
\begin{align*}
\sup_{z\in X_{\G}}\bigg| \frac{\bervol(z)}{\hypbnvol(z)} \bigg|=O_{X_{\G},\epsilon}\big( k^{4+\epsilon}\big),
\end{align*}
where the implied constant depends on the Picard surface $X_{\G}$, and on the choice of $\epsilon>0$.
\begin{proof}
The proof of the corollary follows directly from Theorem \ref{thm9}.
\end{proof}
\end{cor}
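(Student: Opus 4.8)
The plan is to deduce Corollary \ref{cor10} directly from Theorem \ref{thm9} through two elementary observations, after which only a cosmetic reformulation of the growth rate remains.

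First, I would observe that the quantity $\bervol(z)/\hypbnvol(z)$ is a well-defined function on the compact quotient $X_{\G}$, and not merely on the fundamental domain $\calf_{\G}$. Indeed, $\hypbnvol$ is the volume form of the $\mathrm{SU}((2,1),\C)$-invariant hyperbolic metric, while $\bervol$ is built from $\big|\blk(z,z)\big|_{\mathrm{hyp}}$, which is $\G$-invariant; hence both volume forms descend to $X_{\G}$ and their ratio is a $\G$-invariant function. Consequently $\sup_{z\in X_{\G}}$ and $\sup_{z\in\calf_{\G}}$ agree, and the bound of Theorem \ref{thm9} holds verbatim with the supremum taken over $X_{\G}$.

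Second, I would control the constant factor $(\ctildethreek/\cthreek)^3$ appearing in Theorem \ref{thm9}. Dividing the definition \eqref{ctildek} of $\ctildethreek$ by $\cthreek$ gives
\[
\frac{\ctildethreek}{\cthreek}=1+\frac{\sinh^{4}(3\rx/4)}{\sinh^{4}(\rx/4)\cosh^{3k}(\rx/2)}+\frac{32}{(3k-8)\sinh^{4}(\rx/4)\cosh^{3k-8}(\rx/2)}.
\]
Since the injectivity radius $\rx>0$ is a fixed positive constant depending only on $X_{\G}$, we have $\cosh(\rx/2)>1$, so both correction terms decay exponentially in $k$. Therefore $\ctildethreek/\cthreek\to 1$ as $k\to\infty$, and in particular $(\ctildethreek/\cthreek)^3$ is bounded above by a constant depending only on $X_{\G}$ for all $k\gg 1$. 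Combining this with the first observation converts Theorem \ref{thm9} into the clean statement $\sup_{z\in X_{\G}}\big|\bervol(z)/\hypbnvol(z)\big|=O_{X_{\G}}(k^{4}\log k)$.

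Finally, the passage from $k^{4}\log k$ to $k^{4+\epsilon}$ is immediate: for every $\epsilon>0$ one has $\log k=O_{\epsilon}(k^{\epsilon})$, so that $O_{X_{\G}}(k^{4}\log k)=O_{X_{\G},\epsilon}(k^{4+\epsilon})$, with the implied constant now depending additionally on the choice of $\epsilon$, which is exactly the assertion of the corollary. There is no genuine obstacle in this argument; the only step warranting explicit verification is the boundedness of $\ctildethreek/\cthreek$, and this follows, as indicated above, solely from the strict positivity of $\rx$ and the resulting exponential decay of the tail terms in \eqref{ctildek}.
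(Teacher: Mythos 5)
Your proposal is correct and follows the same route as the paper, which simply states that the corollary follows directly from Theorem \ref{thm9}; you have merely made explicit the three routine steps (descent of the ratio to $X_{\G}$, boundedness of $\ctildethreek/\cthreek$ via the exponential decay of the correction terms in \eqref{ctildek}, and $\log k=O_{\epsilon}(k^{\epsilon})$) that the paper leaves implicit.
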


\vspace{0.2cm}
\begin{cor}\label{cor11}
With the notation above, for any $z\in \calf_{\G}$, we have the following estimate
\begin{align*}
\lim_{k\rightarrow\infty}\frac{1}{k^4\log k}\bigg| \frac{\bervol(z)}{\hypbnvol(z)} \bigg|\leq 6048.
\end{align*}
\begin{proof}
From estimate \eqref{ctildek}, we have
\begin{align*}
\lim_{k\rightarrow \infty}\frac{ \ctildethreek}{\cthreek}=1. 
\end{align*}
Combining estimate \eqref{thm9-eqn2}  from Theorem \ref{thm9} with the above equation, completes the proof of the corollary. 
\end{proof}
\end{cor}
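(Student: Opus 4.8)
The plan is to go back to the exact decomposition $\big|\bervol(z)/\hypbnvol(z)\big| = \tfrac{(1-|z|^2)^3}{\pi^2}\,|T_1+T_2+T_3+T_4|$ recorded in Lemma \ref{lem5} and, rather than bounding everything crudely by $k^4$ as in Theorem \ref{thm9}, to isolate the genuine leading-order behaviour in $k$. First I would inspect the four bounds collected in \eqref{thm9-eqn2}: in terms of powers of $k$, the term $T_1$ is of order $k^2$ and $T_2$ is of order $k^3$, whereas both $T_3$ and $T_4$ are of order $k^4$. Reading off the coefficients of the top-degree contributions, the $864k^2(k+1)^2$ part of $T_3$ in \eqref{lem5:eqn2} contributes a leading coefficient $864$, while the $5184k^3(k+1)$ part of $T_4$ in \eqref{lem5:eqn3} contributes a leading coefficient $5184$. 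After dividing by $k^4\log k$ the contributions of $T_1$ and $T_2$ tend to $0$, so only these two quartic pieces can survive in the limit, and their coefficients add to $864+5184=6048$.

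The second ingredient is to replace the finite-$k$ slack in Theorem \ref{thm9} by sharp limiting ratios. From \eqref{ctildek} one has $\lim_{k\to\infty}\ctildethreek/\cthreek=1$, and hence $\sqctildethreek/(\cthreek)^2\to 1$ and $\cuctildethreek/(\cthreek)^3\to 1$ as well. Moreover, combining \eqref{thm9-eqn3} with \eqref{thm9-eqn4} shows that the tail $\sum_{\gamma\neq\mathrm{Id}}\cthreek/\cosh^{3k}(\dhyp(z,\gamma z)/2)$ is $o(\cthreek)$, so that $\big|\blk(z,z)\big|_{\mathrm{hyp}}=\cthreek\bigl(1+o(1)\bigr)$ as $k\to\infty$. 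This upgrades the merely convenient lower bound $\big|\blk(z,z)\big|_{\mathrm{hyp}}\geq\cthreek/2$ of \eqref{thm9-eqn5} to an asymptotic equality, which is precisely what eliminates the factors of $2$ (the replacement of $(2\ctildethreek/\cthreek)^3$ by $1$) that inflate the constant in Theorem \ref{thm9}. Consequently each ratio $\sqctildethreek/\big|\blk(z,z)\big|^2_{\mathrm{hyp}}$ and $\cuctildethreek/\big|\blk(z,z)\big|^3_{\mathrm{hyp}}$ entering the bounds for $T_3$ and $T_4$ tends to $1$.

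Putting the two steps together, I would estimate $\big|\bervol(z)/\hypbnvol(z)\big|$ by $\tfrac{(1-|z|^2)^3}{\pi^2}\bigl(|T_3|+|T_4|\bigr)$ up to terms that are $o(k^4)$, substitute the limiting values of all the constant ratios, and absorb the $(1-|z|^2)^{-1}$ factors through the bound $1/(1-|z|^2)\leq\log k$ used in \eqref{thm9-eqn7}, which is exactly the normalisation responsible for the $\log k$ in the statement. Dividing by $k^4\log k$ and letting $k\to\infty$ then leaves only the sum $864+5184=6048$ of the two leading quartic coefficients. The part that needs the most care is the bookkeeping of the $(1-|z|^2)$-powers and the $\pi^{-2}$ factor as they propagate from Propositions \ref{prop6}, \ref{prop7} and \ref{prop8} into \eqref{thm9-eqn2}, together with the verification that $T_1$ and $T_2$ are of strictly lower order in $k$ and hence drop out; once these are pinned down, the conclusion follows directly from $\ctildethreek/\cthreek\to 1$ and $\big|\blk(z,z)\big|_{\mathrm{hyp}}\to\cthreek$.
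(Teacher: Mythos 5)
Your proposal is correct and takes essentially the same route as the paper: the paper's own (very terse) proof likewise combines the bounds of \eqref{thm9-eqn2} with $\lim_{k\to\infty}\ctildethreek\slash\cthreek=1$, and your identification of the two surviving quartic coefficients $864+5184=6048$, the upgrade of \eqref{thm9-eqn5} to $\big|\blk(z,z)\big|_{\mathrm{hyp}}=\cthreek\,(1+o(1))$, and the use of \eqref{thm9-eqn7} for the $\log k$ normalisation are exactly the details the paper leaves implicit.
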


\vspace{0.2cm}
\subsection*{Acknowledgements}
 The first author acknowledges the support of INSPIRE research grant DST/INSPIRE/04/2015/002263 and the MATRICS grant MTR/2018/000636. 

\vspace{0.25cm}


\end{document}